\newtheorem{teo}{Theorem}[section]
\newtheorem{pro}[teo]{Proposition}
\newtheorem{coro}[teo]{Corollary}
\newtheorem{lem}[teo]{Lemma}
\theoremstyle{definition}
\newtheorem{defi}[teo]{Definition}
\newtheorem{exam}[teo]{Example}
\newtheorem{rem}[teo]{Remark}
\newcommand{\N}{\mathbb N}
\newcommand{\K}{\mathbb K}
\newcommand{\pols}{\mathbb K[x_1,\ldots,x_s]}
\newcommand{\A}{\mathbb A}
\newcommand{\F}{\mathcal{F}}
\newcommand{\V}{\mathbf{V}}
\newcommand{\m}{\mathfrak{m}}
\newcommand{\J}{\mathcal J}
\newcommand{\Ome}{\Omega^{1}_{A/\K}}
\newcommand{\Omen}{\Omega^{1}_{A_n/\K}}
\newcommand{\OmeA}{\Omega^{(m)}_{A/\K}}
\newcommand{\Ls}{\Lambda}
\newcommand{\Lsc}{\Lambda^0}
\newcommand{\cc}{\underline{0}}
\newcommand{\Dnm}{D_n(\Jac_m(f))}
\newcommand{\OB}{\OmeA\otimes_A B_n}
\newcommand{\Jac}{\operatorname{Jac}}
\newcommand{\HS}{\operatorname{HS}}
\newcommand{\pd}{\operatorname{projdim}}
\newcommand{\Sp}{\operatorname{Spec}}
\newcommand{\rk}{\operatorname{rank}}
\newcommand{\Sing}{\operatorname{Sing}}
\providecommand{\msc}[1]{{\textbf{MSC 2020:}} #1}
\begin{document}

\title{A Nobile-like theorem for jet schemes of hypersurfaces}
\author{Paul Barajas, Daniel Duarte}
\maketitle

\begin{abstract}
We prove that, for the jet scheme of a singular hypersurface, the blowup of a certain jet-related module is not an isomorphism. In conjunction with recent developments in the theory of Nash blowups, our result holds over fields of arbitrary characteristic. Our approach is based on explicit presentations given by a higher-order Jacobian matrix combined with a certain jet-related matrix.
\end{abstract}


\msc{14F10,14B05,13C13}


\section*{Introduction}

In recent years, several authors have proposed higher-order versions of the Jacobian matrix. Initially, such a matrix was introduced as an attempt to give an explicit presentation of the module of high-order differentials. That presentation was used to study higher Nash blowups of hypersurfaces \cite{D2}. Later on, the same matrix reappeared in \cite{BD,BJNB}. In those papers, the matrix was used to investigate singularities in positive characteristic and algebraic properties of the module of high-order differentials. Moreover, a higher-order Jacobian matrix of a morphism was used to solve a conjecture concerning the higher Nash blowup of curves \cite{ChDG}. Finally, a further application of this matrix was given in \cite{Ch}, in relation to the higher Nash blowup of the $A_n$-singularity.

In another but related direction, in \cite{dFDo2} it is described a matrix associated to the induced map on jet schemes of the Nash transformation of a coherent sheaf. The first goal of this paper is to further develop the study of this jet-related matrix by combining it with the higher-order Jacobian matrix. Firstly, we investigate some basic homological properties of the module corresponding to the mentioned coherent sheaf, by means of explicit presentations. Secondly, we apply these properties in the particular cases of the module of differentials and the module of high-order differentials. Finally, we give a geometric application of these results regarding Nash blowups.

Recall that the Nash blowup and the higher Nash blowup of an algebraic variety are modifications that replace singular points by limits of tangent spaces or limits of infinitesimal neighborhoods. It has been proposed to solve singularities using these constructions \cite{S,No,Y}. These questions have been extensively studied \cite{No,R,GS1,GS2,Hi,Sp,GT,GM,D0,Cha,Y,T,ChDG,DN2}.

A basic property of Nash blowups, proved by A. Nobile, says that the Nash blowup of a variety is an isomorphism only if the variety is non-singular, in characteristic zero \cite{No}. This theorem was recently revisited in \cite{DN1}, showing the analogous statement for normal varieties in positive characteristic. In addition, there are several higher-order versions of Nobile's theorem (for zero and positive characteristics): for normal toric varieties \cite{D1,DN2}, for toric curves \cite{ChDG}, for normal hypersurfaces \cite{D2,DN1}, for $F$-pure varieties and quotient varieties \cite{DN1}. In additon to being an important result for the theory of Nash blowups, Nobile's theorem has other applications. For instance, it appears in the study of link theoretic characterization of smoothness \cite{dFT}. 

In this paper we present a Nobile-like theorem in the context of jet schemes of hypersurfaces. Our main theorem states that, for the jet scheme of a singular hypersurface, the blowup of a certain jet-related module is not an isomorphism (see Theorem \ref{Nobile}). Following recent developments in the theory of Nash blowups, our result holds in arbitrary characteristic. Finally, the main theorem suggests that this blowup could have some interest in the problem of resolution of singularities of jet schemes.


\section{Nash blowups relative to sheafs and jet schemes}\label{DFDO}

\textit{All rings are assumed to be commutative and with unit element. Letters $n$ and $m$ always denote natural numbers, where $n\geq0$ and $m\geq1$.}

In this section we briefly recall the main concepts that we use throughout this paper: Hasse-Schmidt derivations and algebra, jet schemes, and the Nash transformation relative to a coherent sheaf (our main references on these topics are \cite{V} and \cite{OZ}). We also present a result of T. de Fernex and R. Docampo that was the main motivation for this paper.

Let $\K$ be a ring. Let $A$ be a $\K$-algebra with structural morphism $f:\K\to A$. Let $B$ be another $\K$-algebra. A \textit{Hasse-Schmidt derivation of order $n$ from $A$ to $B$ over $\K$} is a sequence $(D_0,\ldots,D_n)$, where $D_0:A\to B$ is a $\K$-algebra homomorphism, $D_i:A\to B$ is additive and $D_i(f(\lambda))=0$ for $i\in\{1,\ldots,n\}$ and $\lambda\in \K$, and the following formula is satisfied for all $i$:
\[D_i(xy)=\sum_{j+k=i}D_j(x)D_k(y).\]

Now define a $\K$-algebra as the quotient of a polynomial algebra
$$\HS_{A/\K}^n:=A[x^{(i)}| x\in A, i=1,\ldots,n]/I,$$  
where $I$ is the ideal generated by $\{(x+y)^{(i)}-x^{(i)}-y^{(i)}|x,y\in A, i=1,\ldots,n\}$, $\{f(\lambda)^{(i)}|\lambda\in \K, i=1,\ldots,n\}$, and $\{(xy)^{(i)}-\sum_{j+k=i}x^{(j)}y^{(k)}| x,y\in A, i=0,\ldots,n\}$. For $i=0,\ldots, n$, let us define $d_i:A\to \HS^n_{A/\K}$, $d_i(x)=x^{(i)}$. The algebra $\HS_{A/\K}^n$ is called the \textit{Hasse-Schmidt algebra of $A$} and the sequence $(d_0,\ldots,d_n)$ is called the \textit{universal Hasse-Schmidt derivation}. Throughout this paper, we use indistinctly $d_i(x)$ or $x^{(i)}$ to represent the image of $x$ by $d_i$.

Following \cite{V}, we define the $n$-th jet scheme of an affine scheme using the Hasse-Schmidt algebra. Let $\K$ and $A$ be as before, we denote $$J_n(\Sp(A)/\Sp(\K)):=\Sp(\HS^n_{A/\K}).$$
In general, given $X\to Y$ a morphism of schemes, the $n$-th jet scheme of $X$ over $Y$ is defined as $J_n(X/Y):=\mathbf{Spec}(\HS_{X/Y}^n)$, where $\mathbf{Spec}$ is the relative spectrum and the sheaf $\HS^n_{X/Y}$ is constructed using localizations properties of the Hasse-Schmidt algebra. If the base scheme is affine, we simply write $J_n(X)$.

Now we recall the definition of Nash transformation relative to a sheaf. Let $X$ be a Noetherian integral scheme and $\mathcal{F}$ a coherent $\mathcal{O}_X$-module, locally free of constant rank $d$ over an open dense subset $U$ of $X$. Let $G:=\mbox{Grass}_d(\mathcal{F})$ be the Grassmanian of locally free quotients of $\mathcal{F}$ and $\pi:G\to X$ be the structural morphism \cite[Th\'eor\`eme 9.7.4]{Gr}. By considering the section $\sigma:U\to G$ induced by $\pi$, the \textit{Nash trasformation of $X$ relative to $\mathcal{F}$} is defined as the closure $N(X,\mathcal{F}):=\overline{\sigma(U)}$ with its reduced structure \cite[Definition 1.1]{OZ}.

Particular cases of this construction are the Nash blowup and the higher Nash blowup of an algebraic variety.

\begin{defi}\cite{S,No,OZ,Y}\label{higher Nash}
Let $X$ be a variety over an algebraically closed field $\K$.
\begin{itemize}
\item We call $N(X,\Omega^1_{X/\K})$ the \textit{Nash blowup of} $X$, where $\Omega^1_{X/\K}$ is the sheaf of K\"ahler differentials of $X$ over $\K$.
\item We call $N(X,\Omega^{(m)}_{X/\K})$ the \textit{higher Nash blowup of} $X$, where $\Omega^{(m)}_{X/\K}$ is the sheaf of high-order K\"ahler differentials (see section \ref{omegam} below). 
\end{itemize}
\end{defi}

A morphism of schemes induces a morphism on jet schemes. However, the induced morphism not always share the properties of the original one. For instance, projectivity is not preserved \cite[Example 5.12]{V}. Hence, it is natural to ask how to projectivize the induced morphism. This question is answered in the following theorem, for main components.

\begin{teo}\cite[Theorem 1.2]{dFDo2}\label{FD Nash}
Let $X$ be a variety over a field $\K$, and let $\mu:N(X,\F)\to X$ be the Nash transformation of a coherent sheaf $\F$ on $X$. Denote as $J'_n(X)$ the main component of $J_n(X)$ and $\Delta_n=\Sp(\K[t]/\langle t^{n+1} \rangle)$. Let
$$\xymatrix{J_n'(X)\times \Delta_n \ar@{->}[r]^{\gamma_n'} \ar@{->}[d]_{\rho_n'} & X  \\  J_n'(X)}$$
be the diagram induced by restriction from the universal $n$-jet of $X$. Define $\F_n':=(\rho'_n)_{*}(\gamma'_n)^{*}\F$. Then the induced map $\mu'_n:J'_n(N(X,\F))\to J'_n(X)$ factors as
$$\xymatrix{J'_n(N(X,\F))\ar[r]^{\iota_n}&N(J'_n(X),\F'_n)\ar[r]^{\nu_n}&J'_n(X)},$$
where $\iota_n$ is an open immersion and $\nu_n$ is the Nash transformation of $\F'_n$.
\end{teo}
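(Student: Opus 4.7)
The strategy is to construct $\iota_n$ functorially using the universal property of the Grassmannian bundle, and to verify the open immersion property by reducing to a generic locus on which both sides are canonically isomorphic.

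First I would establish an explicit local description of $\F_n'$. Over an affine open $\Sp(A)\subseteq X$ with $\F$ corresponding to an $A$-module $M$, the universal Hasse-Schmidt derivation $A\to \HS^n_{A/\K}$ plays the role of the universal $n$-jet; unwinding the definitions of $(\gamma_n')^*$ and $(\rho_n')_*$ identifies $\F_n'|_{J_n(\Sp A)}$ with the quasi-coherent sheaf associated with the $\HS^n_{A/\K}$-module $M\otimes_A \HS^n_{A/\K}$. Consequently, the fiber of $\F_n'$ at a jet $\phi:\Delta_n\to \Sp A$ is $\phi^*M$ viewed as a $\K$-vector space; for jets whose base point lies in the locus $U$ where $\F$ is locally free of rank $d$, this fiber has dimension $d(n+1)$. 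So $\F_n'$ has generic rank $d(n+1)$ on $J_n'(X)$, and $N(J_n'(X),\F_n')$ sits inside $\mbox{Grass}_{d(n+1)}(\F_n')$ as the closure of a canonical section over this open locus.

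Next, to build $\iota_n$ I would work in the functor-of-points language. An $S$-valued point $\Psi:S\times_\K\Delta_n\to N(X,\F)$ of $J_n'(N(X,\F))$ composed with the inclusion $N(X,\F)\hookrightarrow\mbox{Grass}_d(\F)$ yields, by the Grassmannian universal property, a rank-$d$ locally free quotient of $(\mu\circ\Psi)^*\F$ on $S\times\Delta_n$. Pushing this quotient forward along the projection $S\times\Delta_n\to S$ and using flat base change together with the local model of $\F_n'$, I identify the pushforward of $(\mu\circ\Psi)^*\F$ with the pullback of $\F_n'$ to $S$, and obtain a rank-$d(n+1)$ locally free quotient of the latter. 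By the universal property of $\mbox{Grass}_{d(n+1)}(\F_n')$, this defines a morphism $\iota_n:J_n'(N(X,\F))\to \mbox{Grass}_{d(n+1)}(\F_n')$ over $J_n'(X)$. Over the open $W\subseteq J_n'(X)$ of jets with base point in $U$, $\mu$ is an isomorphism and $\F_n'$ is locally free, so $\iota_n$ restricts to an isomorphism onto $\nu_n^{-1}(W)$; taking closures then shows the image of $\iota_n$ lies inside $N(J_n'(X),\F_n')$.

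The main obstacle is proving that $\iota_n$ is an open immersion rather than merely birational. This breaks into two parts: (i) showing $\iota_n$ is a monomorphism, which should follow because the original rank-$d$ quotient on $S\times\Delta_n$ can be recovered from its pushforward by using the $\HS^n_{A/\K}$-module structure; and (ii) showing the image is an open subscheme of $N(J_n'(X),\F_n')$, which amounts to identifying it with the locus where the tautological quotient on $\mbox{Grass}_{d(n+1)}(\F_n')$ is stable under the $\K[t]/\langle t^{n+1}\rangle$-action inherited from $\F_n'$. This stability condition is cut out by the non-vanishing of certain minors of the module-action matrix, hence is locally open; compatibly incorporating this description with the reduced structure of $N(J_n'(X),\F_n')$ and ruling out extraneous behavior of $J_n'(N(X,\F))$ under $\iota_n$ away from $W$ is the most delicate step.
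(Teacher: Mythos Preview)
This theorem is not proved in the present paper: it is quoted verbatim from \cite[Theorem~1.2]{dFDo2} as background motivation, and no argument for it is given here. Consequently there is no ``paper's own proof'' to compare your proposal against; any assessment would have to be made against the original article of de Fernex and Docampo.

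That said, one concrete inaccuracy in your sketch is worth flagging. Your local identification of $\F_n'$ as the sheaf associated to $M\otimes_A \HS^n_{A/\K}$ omits the $\Delta_n$ factor: the map $\gamma_n'$ has source $J_n'(X)\times\Delta_n$, and the pushforward $(\rho_n')_*$ is along the projection killing $\Delta_n$. The correct local model is $M\otimes_A B_n$ with $B_n=A_n[t]/\langle t^{n+1}\rangle$, the $A$-action on $B_n$ coming from $\gamma_n^{\#}(a)=\sum_i d_i(a)t^i$, and then viewed as an $A_n$-module. This is precisely the object studied in Section~\ref{main} of the paper. Your rank count $d(n+1)$ and the rest of the outline are unaffected by this correction, but the $\K[t]/\langle t^{n+1}\rangle$-module structure you invoke later for the open-immersion step lives exactly on this $B_n$, so getting the model right matters for that part of the argument.
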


In view of the importance of the sheaf $\F_n'$, in the following section we study basic algebraic properties of $F\otimes_A\HS^n_{A/\K}[t]/\langle t^{n+1} \rangle$, where $F$ is an arbitrary $A$-module.


\section{Presentation of $F\otimes_A B_n$}\label{main}

As in \cite{dFDo1}, we use the following notation from now on: for a ring $\K$ and a $\K$-algebra $A$, let $A_n:=\HS^n_{A/\K}$ and $B_n:=\frac{A_n[t]}{\langle t^{n+1} \rangle}$. Consider the $\K$-algebra homomorphism 
\begin{align}
\gamma^{\#}_n&:A\longrightarrow B_n, \mbox{ }\mbox{ }a\mapsto\sum_{i=0}^nd_i(a)t^i.\notag
\end{align}
Give $B_n$ structure of $A$-module via $\gamma^{\#}_n$. Notice that $B_n$ is also naturally an $A_n$-module. These two module structures are used constantly in what follows.

Let $F$ be an $A$-module. Consider $F\otimes_A B_n$ with the structure of $A_n$-module induced by that of $B_n$. Suppose one has a presentation of $F$ as an $A$-module, given by the transpose of some matrix $L\in\mbox{Mat}_{b\times a}(A)$:
\begin{equation}
\xymatrix{A^{b}\ar[r]^{L^t}&A^{a}\ar[r]&F\ar[r]&0}.\notag
\end{equation}
Consider the following matrix:
\begin{equation}\notag
D_n(L):=
\left( \begin{array}{ccccccc} 
d_0(L) & d_1(L) & d_2(L) & \cdots & d_{n-1}(L) & d_n(L) \\
0  & d_0(L) & d_1(L) & \cdots & d_{n-2}(L) &d_{n-1}(L) \\
0 & 0 & d_0(L) & \cdots & d_{n-3}(L) & d_{n-2}(L)\\
\vdots & \vdots & \vdots & \ddots & \vdots & \vdots&\vdots\\
0 & 0 & \cdots & 0 & d_{0}(L) & d_{1}(L)\\
0 & 0 & \cdots & 0 & 0 & d_{0}(L)
\end{array} \right),
\end{equation}
where $d_i(L)$ is obtained by applying $d_i$ to each entry of $L$. Notice that $D_n(L)$ is a $(n+1)b\times(n+1)a-$matrix.

\begin{rem}\label{Ln}
A matrix having the same shape as $D_n(L)$ was defined by T. de Fernex and R. Docampo, in the case $F$ is torsion-free and $A$ is a domain \cite[Section 3]{dFDo2}. Neither of these is a restriction in this paper.
\end{rem}

We need the following elementary remark for the proof of the main theorem of this section.

\begin{rem}\label{isom An}
Consider the following series of isomorphisms of $A_n$-modules: 
\begin{align}\label{iso tens}
&A^{\lambda}\otimes_A B_n \longrightarrow B_n^{\lambda} \longrightarrow (A_n^{n+1})^{\lambda} \longrightarrow (A_n^{\lambda})^{n+1},\\
&e_k\otimes t^i \mapsto (0,\ldots,t^i,\ldots,0) \mapsto e'_{k,i} \mapsto e''_{i,k},\notag
\end{align}
where $\{e_1,\ldots,e_{\lambda}\}$, $\{e'_0,\ldots,e'_{n}\}$, and $\{e''_1,\ldots,e''_{\lambda}\}$ are the canonical basis of $A^{\lambda}$,  $A_n^{n+1}$, and $A_n^{\lambda}$, respectively; $e'_{k,i}:=(\cc,\ldots,e'_{i},\ldots,\cc)$, where $e_i'$ is placed at the $k$th entry of $(A_n^{n+1})^{\lambda}$; $e''_{i,k}:=(\cc,\ldots,e''_k,\ldots,\cc)$, where $e''_k$ is placed at the $i$th entry of $(A_n^{\lambda})^{n+1}$.

In addition, these isomorphisms induce on $A_n^{n+1}$, $(A_n^{n+1})^{\lambda}$, and $(A_n^{\lambda})^{n+1}$ the $A$-module structure of $B_n$. More precisely, $A$ acts on $(A_n^{\lambda})^{n+1}$ as follows,
\begin{align}\label{struct A mod}
\Big(a,\big((z_{10},\ldots,z_{\lambda0}),\ldots,(z_{1n},\ldots,z_{\lambda n})\big)\Big)\mapsto (P_0,\ldots,P_n),
\end{align}
where $P_i=(d_0(a)z_{1i}+\cdots+d_i(a)z_{10},\ldots,d_0(a)z_{\lambda i}+\cdots+d_i(a)z_{\lambda 0})$.
\end{rem}

\begin{teo}\label{presentation}
Consider a presentation of an $A$-module $F$
$$\xymatrix{A^{b}\ar[r]^{L^t}&A^{a}\ar[r]^{\theta}&F\ar[r]&0}.$$
Then the $A_n$-module $F\otimes_A B_n$ has the following presentation,
\[\xymatrix{\big(A_n^{b}\big)^{n+1}\ar[rr]^{D_n(L)^t}&&\big(A_n^{a}\big)^{n+1} \ar[r]&F\otimes_A B_n\ar[r]&0}.\]
\end{teo}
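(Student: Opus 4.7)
The plan is to apply the right exact functor $-\otimes_A B_n$ to the given presentation of $F$, obtaining
\[
A^b \otimes_A B_n \xrightarrow{L^t \otimes \mathrm{id}} A^a \otimes_A B_n \longrightarrow F \otimes_A B_n \longrightarrow 0,
\]
and then to invoke the $A_n$-linear isomorphisms of Remark \ref{isom An} to identify $A^\lambda \otimes_A B_n$ with $(A_n^\lambda)^{n+1}$ for $\lambda = a,b$. All that will remain is to verify that under these identifications the map $L^t \otimes \mathrm{id}$ becomes $D_n(L)^t$; once this is established, the desired presentation follows immediately from right exactness.

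Since both candidates for the map are $A_n$-linear between free $A_n$-modules, it suffices to compare them on the $A_n$-basis $\{e''_{i,k}\}_{i,k}$ of $(A_n^b)^{n+1}$ corresponding to the pure tensors $\{e_k \otimes t^i\}$. Using that the $A$-action on $B_n$ is given by $\gn$, I would compute in $A^a \otimes_A B_n$
\[
(L^t \otimes \mathrm{id})(e_k \otimes t^i) = \sum_j e_j \otimes \gn(L_{kj})\,t^i = \sum_j \sum_{q=i}^{n} e_j \otimes d_{q-i}(L_{kj})\,t^q,
\]
where the second equality uses $t^{n+1} = 0$ in $B_n$. Sending $e_j \otimes t^q$ to $e''_{q,j}$ as in Remark \ref{isom An}, the image corresponds to the element of $(A_n^a)^{n+1}$ whose $q$-th block equals $\sum_j d_{q-i}(L_{kj})\,e_j$ for $q \geq i$ and vanishes for $q < i$.

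On the other hand, $D_n(L)^t$ is block lower-triangular with $(r,c)$-block $d_{r-c}(L)^t$ for $r \geq c$, so evaluated at $e''_{i,k}$ it yields the vector whose $r$-th block is $d_{r-i}(L)^t e_k = \sum_j d_{r-i}(L_{kj})\,e_j$ for $r \geq i$ and $0$ otherwise. This matches the expression from the previous paragraph, completing the identification. The main conceptual point, and the only real obstacle, is keeping straight how the $A$-module structure on $B_n$ via $\gn$ forces the shifted derivations $d_{q-i}$ to appear in exactly the subdiagonal blocks of $D_n(L)^t$; everything else is formal.
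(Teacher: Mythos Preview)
Your proposal is correct and follows essentially the same approach as the paper: tensor the given presentation by $B_n$, invoke the $A_n$-linear isomorphisms of Remark~\ref{isom An}, and verify on the canonical basis that the induced map is $D_n(L)^t$. The only cosmetic difference is that you carry out the computation directly in $A^a\otimes_A B_n$ using $\gn$ before passing through $\Phi_2$, whereas the paper first applies $\Phi_2$ and then unwinds the $A$-action via formula~(\ref{struct A mod}); both routes produce the same subdiagonal block pattern $d_{r-i}(L)^t$.
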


The proof of this theorem relies on basic facts of the tensor product along with a careful study of all involved module structures. Even though it is elementary, it might be troublesome to follow because of the many indices. In an attempt to make it clearer, we first illustrate the main steps of the proof in the case $a=2$, $b=1$, and $n=2$. 

\begin{exam}
Tensoring the presentation of $F$ with $B_2$ we obtain
\begin{equation}
\xymatrix{A\otimes_A B_2\ar[r]^{\overline{\Delta}}&A^{2}\otimes_AB_2\ar[r]&F\otimes_AB_2\ar[r]&0},\notag
\end{equation}
where $\overline{\Delta}=L^t\otimes Id$.
Using the isomorphisms (\ref{iso tens}) we get:
\begin{equation}
\xymatrix{\big(A_2\big)^3\ar[r]^{\Delta}&\big(A_2^2\big)^3\ar[r]&F\otimes_AB_2\ar[r]&0}.\notag
\end{equation}
We show that $\Delta=D_2(L)^t$.

Denote as $\Phi_1:A\otimes_AB_2\to (A_2)^{3}$ and $\Phi_2:A^{2}\otimes_A B_2\to (A_2^{2})^{3}$ the isomorphisms (\ref{iso tens}). Let $\{1\otimes t^i\}_{0\leq i\leq2}$ and $\{f_{k}\otimes t^i\}_{1\leq k\leq 2,0\leq i\leq 2}$ be the canonical bases of $A\otimes_AB_2$ and $A^{2}\otimes_A B_2$, respectively. Similarly, $\{e_{i}\}_{0\leq i \leq 2}$ and $\{f_{i,k}\}_{0\leq i\leq 2,1\leq k\leq2}$ for  $(A_2)^{3}$ and $(A_2^{2})^{3}$, respectively.  By (\ref{iso tens}), we have $\Phi_1(1\otimes t^i)=e_{i}$ and $\Phi_2(f_{k}\otimes t^i)=f_{i,k}$.

Let $L=(l_1\mbox{ } l_2)$. By construction, $\Delta\circ\Phi_1=\Phi_2\circ\overline{\Delta}$. For $i\in\{0,1,2\}$ we have,
$$\Delta(e_{i})=\Delta(\Phi_1(1\otimes t^i))=\Phi_2(\overline{\Delta}(1\otimes t^i))=\Phi_2(L\otimes t^i)=l_1\cdot f_{i,1}+l_2\cdot f_{i,2}.$$
Now let us describe the products $l_k\cdot f_{i,k}$. By (\ref{struct A mod}), we obtain:
\begin{align} 
l_k\cdot f_{0,k}&=l_k\cdot(f_k,(0,0),(0,0))=d_0(l_k)f_{0,k}+d_1(l_k)f_{1,k}+d_{2}(l_k)f_{2,k},\notag\\
l_k\cdot f_{1,k}&=l_k\cdot((0,0),f_k,(0,0))=d_0(l_k)f_{1,k}+d_1(l_k)f_{2,k},\notag\\
l_k\cdot f_{2,k}&=l_k\cdot((0,0),(0,0),f_k)=d_0(l_k)f_{2,k}.\notag
\end{align} 
Hence, 
\begin{align} 
\Delta(e_0)&=d_0(l_1)f_{0,1}+d_1(l_1)f_{1,1}+d_{2}(l_1)f_{2,1}+d_0(l_2)f_{0,2}+d_1(l_2)f_{1,2}+d_{2}(l_2)f_{2,2},\notag\\
\Delta(e_1)&=d_0(l_1)f_{1,1}+d_1(l_1)f_{2,1}+d_0(l_2)f_{1,2}+d_1(l_2)f_{2,2},\notag\\
\Delta(e_2)&=d_0(l_1)f_{2,1}+d_0(l_2)f_{2,2}.\notag
\end{align} 
The coefficients with respect to the ordered basis $\{f_{0,1},f_{0,2},f_{1,1},f_{1,2},f_{2,1},f_{2,2}\}$ give place to the following matrix representation of $\Delta$:
\begin{equation}\notag
\Delta=
\left( \begin{array}{ccc} 
d_0(l_1) 	& 0 			& 0 \\
d_0(l_2) 	& 0			& 0 \\
d_1(l_1)  & d_0(l_1) 	& 0 \\
d_1(l_2) 	& d_0(l_2) 	& 0\\
d_2(l_1) 	& d_1(l_1) 	& d_{0}(l_1)\\
d_2(l_2) 	& d_1(l_2) 	& d_{0}(l_2)
\end{array} \right)=D_2(L)^t,
\end{equation}
\end{exam}

\begin{proof}[Proof of Theorem \ref{presentation}]
We first find a presentation of $F\otimes_A B_n$ as an $A$-module. Since tensoring is right-exact, from the presentation of $F$ we obtain
\begin{equation}\label{s1}
\xymatrix{A^{b}\otimes_A B_n\ar[r]^{\overline{\Delta}}&A^{a}\otimes_A B_n \ar[r]^{\overline{\theta}}&F\otimes_A B_n\ar[r]&0},
\end{equation}
where $\overline{\Delta}:=L^t\otimes Id_{B_n}$ and $\overline{\theta}:=\theta\otimes Id_{B_n}$. Notice that $\overline{\Delta}$ and $\overline{\theta}$ are also $A_n$-module homomorphisms (with the $A_n$-module structure induced by that of $B_n$). Therefore, the exact sequence (\ref{s1}) is also an exact sequence of $A_n$-modules. Using the isomorphisms (\ref{iso tens}), the sequence (\ref{s1}) induces the following exact sequence:
\[\xymatrix{\big(A_n^{b}\big)^{n+1}\ar[r]^{{\Delta}}&\big(A_n^{a}\big)^{n+1} \ar[r]&F\otimes_A B_n\ar[r]&0}.\]

Now we describe explicitly $\Delta$ to verify that it coincides with $D_n(L)$. Denote as $\Phi_1:A^{b}\otimes_AB_n\to (A_n^{b})^{n+1}$ and $\Phi_2:A^{a}\otimes_A B_n\to (A_n^{a})^{n+1}$, the isomorphisms (\ref{iso tens}). Let $\{e_{j}\otimes t^i\}_{j,i}$ and $\{f_{k}\otimes t^i\}_{k,i}$ be the canonical bases of $A^{b}\otimes_AB_n$ and  $A^{a}\otimes_A B_n$, respectively. Similarly, $\{e_{i,j}\}_{i,j}$ and $\{f_{i,k}\}_{i,k}$ for  $(A_n^{b})^{n+1}$ and $(A_n^{a})^{n+1}$, respectively.  By (\ref{iso tens}), we have $\Phi_1(e_{j}\otimes t^i)=e_{i,j}$ and $\Phi_2(f_{k}\otimes t^i)=f_{i,k}$.

By construction, $\Delta\circ\Phi_1=\Phi_2\circ\overline{\Delta}$. Let $i\in\{0,\ldots,n\}$ and $j\in\{1,\ldots,b\}$. Denote as $L_j$ the $j$th row of $L$. Thus,
\[\begin{split}
\Delta(e_{i,j})&=\Delta(\Phi_1(e_{j}\otimes t^i))\\
&=\Phi_2(\overline{\Delta}(e_{j}\otimes t^i))\\
&=\Phi_2(L_j\otimes t^i)\\
&=\sum_{k=1}^a (L_j)_{k}\cdot f_{i,k}.
\end{split}\]
Let us analyze the product $(L_j)_{k}\cdot f_{i,k}$, for a fixed $k$. Using the $A$-module structure on $(A_n^{a})^{n+1}$ described in (\ref{struct A mod}), we obtain:
\[\begin{split}
(L_j)_k\cdot f_{i,k}&=(L_j)_k\cdot(\cc,\ldots,f_k,\ldots,\cc)\\
&=d_0((L_j)_k)f_{i,k}+d_1((L_j)_k)f_{i+1,k}+\cdots+d_{n-i}((L_j)_k)f_{n,k}.
\end{split}\]
Therefore, $\Delta(e_{i,j})=\sum_{k=1}^a(L_j)_k\cdot f_{i,k}=\sum_{k=1}^a\sum_{l=0}^{n-i}d_l((L_j)_k)f_{l+i,k}$. In particular, the $(i,j)$-th column of $\Delta$ is:
$$\big(\cc,\dots,\cc,d_{0}(L_j),\dots,d_{n-i}(L_j)\big).$$
This is precisely the $(i,j)$-th column of $D_n(L)$.
\end{proof}

The following corollary shows that a certain homological property of $F$ is inherited by $F\otimes_A B_n$.

\begin{coro}\label{gral pd}
Suppose there is an exact sequence
$$\xymatrix{0\ar[r]&A^{b}\ar[r]^{L^t}&A^{a}\ar[r]&F\ar[r]&0}.$$
If $A_n$ is a domain then $\pd_{A_n}(F\otimes_A B_n)\leq1$, where $\pd(\cdot)$ denotes the projective dimension.
\end{coro}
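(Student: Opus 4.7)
The plan is to deduce the statement directly from Theorem \ref{presentation}. Given the exact sequence $0 \to A^b \xrightarrow{L^t} A^a \to F \to 0$, that theorem produces
\[
(A_n^b)^{n+1} \xrightarrow{D_n(L)^t} (A_n^a)^{n+1} \longrightarrow F \otimes_A B_n \to 0,
\]
where both free modules on the left are projective. To upgrade this into a projective resolution of length one---which is precisely the claim $\pd_{A_n}(F\otimes_A B_n)\leq 1$---it suffices to prove that $D_n(L)^t$ is injective over $A_n$.

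The key observation is that $D_n(L)^t$ is block lower triangular with each diagonal block equal to $d_0(L)^t$. The map $d_0:A\to A_n$ is injective (it admits the retraction sending $x^{(i)}\mapsto 0$ for $i\geq 1$ and acting as the identity on $A$), so $d_0(L)^t$ is simply $L^t$ regarded over $A_n$. The hypothesized injectivity of $L^t$ over $A$---combined with $A$ being a domain, since $A\hookrightarrow A_n$ and $A_n$ is assumed a domain---yields, by McCoy's theorem, a nonzero $b\times b$ minor $\mu$ of $L$ in $A$. This $\mu$ remains nonzero in $A_n$, and thus witnesses injectivity of $d_0(L)^t$ as a map $A_n^b \to A_n^a$.

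From here, injectivity of $D_n(L)^t$ follows by a short induction on the block index: if $(v_0,\ldots,v_n)\in(A_n^b)^{n+1}$ lies in the kernel, the zeroth block equation reads $d_0(L)^tv_0=0$, forcing $v_0=0$; the $k$-th block then reduces to $d_0(L)^t v_k=0$, forcing $v_k=0$. (Alternatively, the $(n+1)b\times(n+1)b$ minor of $D_n(L)^t$ obtained by selecting in each of the $n+1$ block rows the $b$ rows that realize $\mu$ equals, by block triangularity, $\mu^{n+1}\ne 0$, and one applies McCoy once more.) I expect no real obstacle: the argument is a routine combination of McCoy's theorem with the block triangular shape of $D_n(L)$; the only point requiring care is confirming that the chosen $b\times b$ minor survives the embedding $A\hookrightarrow A_n$, which is immediate from the domain hypothesis on $A_n$.
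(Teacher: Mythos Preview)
Your proof is correct and follows essentially the same route as the paper. Both arguments use Theorem~\ref{presentation} to obtain the presentation, then exploit the block upper triangular shape of $D_n(L)$: the paper simply notes that a nonzero $b\times b$ minor of $L^t$ yields a nonzero $(n+1)b\times(n+1)b$ minor of $D_n(L)$ (your $\mu^{n+1}$), and concludes injectivity over the domain $A_n$. You are more explicit than the paper in justifying why $A$ is a domain (via $d_0:A\hookrightarrow A_n$) and in invoking McCoy's theorem by name, and you additionally offer the block-by-block kernel argument as an alternative---but these are expository refinements of the same idea rather than a different approach.
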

\begin{proof}
By the hypothesis, there is a $(b\times b)$-submatrix of $L^t$ having non-zero determinant. By its shape, it follows that there is a $b(n+1)\times b(n+1)$-submatrix of $D_n(L)$ whose determinant is non-zero. Since $A_n$ is a domain, Theorem \ref{presentation} gives a free resolution of $F\otimes_A B_n$ of length 1. 
\end{proof}


\subsection{The module of K\"ahler differentials}\label{F=Ome}

In this section we apply the previous general results to the particular case of the module of K\"ahler differentials of finitely generated algebras. 

Let $\K$ be a field and $A=\pols/\langle f_1,\ldots,f_r \rangle$. Let $\Ome$ be the module of K\"ahler differentials of $A$ over $\K$. Denote as $\Jac(f)$ the Jacobian matrix of the polynomials $f_i's$. The following presentation is well-known, 

$$\xymatrix{A^{r}\ar[rr]^{\Jac(f)^t}&&A^{s} \ar[r]&\Ome\ar[r]&0}.$$

Thus, Theorem \ref{presentation} gives:

\begin{coro}\label{pres om1}
The $A_n$-module $\Ome\otimes_AB_n$ has the following presentation:
\[\xymatrix{\big(A_n^{r}\big)^{n+1}\ar[rr]^{D_n(\Jac(f))^t}&&\big(A_n^{s}\big)^{n+1} \ar[r]&\Ome\otimes_A B_n\ar[r]&0}.\]
\end{coro}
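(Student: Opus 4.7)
The plan is to apply Theorem \ref{presentation} directly, since the corollary is a specialization to $F=\Ome$. The only preliminary step is to recall the standard presentation of the module of K\"ahler differentials attached to a finitely generated quotient algebra $A=\pols/\langle f_1,\ldots,f_r\rangle$. The second fundamental (conormal) exact sequence yields
\[
\xymatrix{I/I^2 \ar[r]& \Omega^1_{\pols/\K}\otimes_{\pols}A\ar[r]&\Ome\ar[r]&0},
\]
where $I=\langle f_1,\ldots,f_r\rangle$, and after identifying the middle term with $A^{s}$ via $dx_i\mapsto e_i$ and sending the class of $f_j$ to the $j$th row of $\Jac(f)$, this becomes exactly the presentation
\[
\xymatrix{A^{r}\ar[rr]^{\Jac(f)^t}&&A^{s} \ar[r]&\Ome\ar[r]&0}
\]
already recorded in the paragraph preceding the corollary.

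With this in hand, I simply feed $L:=\Jac(f)\in\mathrm{Mat}_{r\times s}(A)$, with $b=r$ and $a=s$, into Theorem \ref{presentation}. The conclusion of that theorem produces a presentation of $\Ome\otimes_A B_n$ whose first map is $D_n(L)^t$, which by the very definition of $D_n(\cdot)$ equals $D_n(\Jac(f))^t$. This is precisely the displayed presentation in the statement.

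There is no real obstacle here: all the nontrivial work, namely tracking the two different module structures on $B_n$, identifying $A^{\lambda}\otimes_A B_n$ with $(A_n^{\lambda})^{n+1}$ via the isomorphisms of Remark \ref{isom An}, and checking that the tensored map is given by $D_n(L)^t$, has already been carried out in the proof of Theorem \ref{presentation}. The corollary therefore requires nothing more than invoking that theorem with the Jacobian presentation of $\Ome$.
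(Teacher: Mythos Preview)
Your proof is correct and matches the paper's approach exactly: the paper simply writes ``Thus, Theorem \ref{presentation} gives:'' after recording the Jacobian presentation of $\Ome$, and your proposal spells out precisely this specialization with $L=\Jac(f)$, $b=r$, $a=s$.
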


In the following corollary we show that the matrix $D_n(\Jac(f))$ coincides with the Jacobian matrix of $d_0(f),\dots,d_n(f)$. Notice that the derivatives in $\Jac(f)$ are taken with respect to the variables $x_1,\ldots,x_s$. On the other hand, since $d_k(f_j)\in \pols_n=\K[x_i^{(0)},\ldots,x_i^{(n)}]_{i=1,\dots,s}$ \cite[Proposition 5.1]{V}, the derivatives in $\Jac(d_0(f),\dots,d_n(f))$ are taken with respect to the variables $\{x_i^{(0)},\ldots,x_i^{(n)}\}_{i=1,\dots,s}$.

\begin{coro}\label{fdbd}
Let $A$ be as before. Then 
$$D_n(\Jac(f))=\Jac(d_0(f),\dots,d_n(f)).$$ 
In particular, $\Omen\cong\Ome\otimes_A B_n$ as $A_n$-modules.
\end{coro}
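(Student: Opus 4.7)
My plan is to reduce both claims to the single chain-rule identity
\[
\frac{\partial d_k(f_j)}{\partial x_i^{(\ell)}} \;=\; \begin{cases} d_{k-\ell}\bigl(\partial f_j/\partial x_i\bigr) & \text{if } k \geq \ell, \\ 0 & \text{if } k < \ell, \end{cases}
\]
which, entry by entry, is exactly the scalar pattern appearing in $D_n(\Jac(f))$.

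To establish the identity I would work in the polynomial ring $\pols_n[t]/\langle t^{n+1}\rangle$, before passing to the quotient defining $A_n$. There the map $\gn$ is the $\K$-algebra homomorphism sending $x_i$ to $X_i:=\sum_{\ell=0}^{n}x_i^{(\ell)}t^\ell$, so for every polynomial $f_j$ the exponential-type formula
\[
\sum_{k=0}^{n}d_k(f_j)\,t^k \;\equiv\; f_j(X_1,\ldots,X_s) \pmod{t^{n+1}}
\]
holds. Applying $\partial/\partial x_i^{(\ell)}$ to both sides and using the classical chain rule together with $\partial X_{i'}/\partial x_i^{(\ell)}=\delta_{i,i'}t^\ell$, the right-hand side becomes $t^\ell\cdot(\partial f_j/\partial x_i)(X_1,\ldots,X_s)=\sum_{k}d_{k-\ell}(\partial f_j/\partial x_i)\,t^k$ (with the convention $d_m=0$ for $m<0$). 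Matching coefficients of $t^k$ yields the identity, and a comparison of block entries (with the appropriate ordering of the indices $k$ and $\ell$) then gives $D_n(\Jac(f))=\Jac(d_0(f),\ldots,d_n(f))$.

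For the ``in particular'' statement I would invoke the quotient description $A_n\cong\pols_n/\langle d_k(f_j)\rangle$ from \cite[Proposition~5.1]{V}. The Jacobian criterion supplies a presentation of $\Omen$ by $\Jac(d_0(f),\ldots,d_n(f))^t$, while Corollary~\ref{pres om1} gives the parallel presentation of $\Ome\otimes_A B_n$ via $D_n(\Jac(f))^t$. The matrix equality just proved then forces the two cokernels to coincide as $A_n$-modules. The main obstacle is purely bookkeeping, namely aligning the block ordering of the rows and columns so that the upper-triangular display of $D_n(\Jac(f))$ matches the Jacobian of the jet relations; the genuine mathematical content is just the chain rule applied to $\gn(f_j)$, which in turn rests on $\gn$ being a ring homomorphism.
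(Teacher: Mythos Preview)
Your proposal is correct. The structure of the argument---reducing both claims to the chain-rule identity $\partial_{x_i^{(\ell)}}d_k(f_j)=d_{k-\ell}(\partial_{x_i}f_j)$, and then deducing the isomorphism from the fact that $\Omen$ and $\Ome\otimes_A B_n$ have identical presentations---is exactly the paper's strategy.

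Where you genuinely diverge is in the proof of the identity itself. The paper establishes it by a double induction: Lemma~\ref{comp der 1} handles the one-variable case by induction first on $k$ and then on the exponent $i$ of the monomial $x^i$, expanding $d_k(x^i)=\sum_l x^{(k-l)}(x^{i-1})^{(l)}$ and pushing $\partial_{x^{(j)}}$ through term by term; Proposition~\ref{comp deriv} then bootstraps this to several variables by isolating one factor $x_i^{\alpha_i}$. Your generating-function argument---differentiating the single equation $\sum_k d_k(f_j)t^k=f_j(X_1,\dots,X_s)$ with respect to $x_i^{(\ell)}$ and invoking the ordinary polynomial chain rule---collapses all of this into two lines and works uniformly in arbitrary characteristic, since the chain rule for polynomial composition is a formal algebraic identity. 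What the paper's approach buys is self-containedness: it never leaves the Hasse--Schmidt formalism and does not appeal to the substitution description of $\gn$. What your approach buys is brevity and a clear conceptual reason (the ring homomorphism $\gn$ intertwines the two differentiations). Your acknowledgment that the only remaining work is aligning the block ordering is accurate; the mathematical content is complete.
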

\begin{proof}
The equality of the matrices is a consequence of the following known identities, where $0\leq j\leq k\leq n$ (see lemma \ref{comp der 1} and proposition \ref{comp deriv} below),
$$\frac{\partial}{\partial x_i^{(j)}}(d_k(f_l))=d_{k-j}\Big(\frac{\partial}{\partial x_i}(f_l)\Big).$$
In addition, $A_n=\frac{\K[x_i^{(0)},\ldots,x_i^{(n)}]_{i=1,\dots,s}}{\langle d_0(f_j),\dots, d_n(f_j)|j=1,\dots,r \rangle}$ \cite[Corollary 5.3]{V}. By corollary \ref{pres om1} and what we have just proved, we conclude that $\Omega_{A_n/k}^{1}$ and $\Omega_{A/k}^1\otimes_A B_n$ have identical presentations. The result follows.
\end{proof}

\begin{rem}\label{iso FD}
The isomorphism of corollary \ref{fdbd} holds for arbitrary algebras. This was initially proved in \cite[Theorem 5.3]{dFDo1} and was later revisited in \cite[Theorem 3.11]{CN}. The techniques leading to those theorems are quite different from the ones presented here.
\end{rem}

The identities appearing in the proof of corollary \ref{fdbd} seem to be known. However, we could not find any precise reference. For the sake of completeness, we provide a proof of those identities, starting with the polynomial ring in one variable.

\begin{lem}\label{comp der 1}
Let $j,k\in\N$, $0\leq j \leq k \leq n$. Then 
\[\partial_{x^{(j)}}\circ d_k=d_{k-j}\circ \partial_x.\] 
\end{lem}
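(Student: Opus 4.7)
The plan is to derive the identity from the generating-function description of the universal Hasse-Schmidt derivation on $\K[x]$. By \cite[Proposition 5.1]{V}, for $A=\K[x]$ one has $A_n=\K[x^{(0)},\ldots,x^{(n)}]$, and the full HS-derivation is packaged in the $\K$-algebra homomorphism
$$\gn\colon \K[x]\longrightarrow A_n[t]/\langle t^{n+1}\rangle,\qquad x\longmapsto \sum_{i=0}^n x^{(i)}\, t^i,$$
so that $\gn(f)=\sum_{k=0}^n d_k(f)\, t^k$ for every $f\in \K[x]$. Since $\gn$ is a $\K$-algebra homomorphism and $\K[x]$ is generated by $x$ over $\K$, this gives the explicit substitution formula $\gn(f)=f\bigl(\sum_{i=0}^n x^{(i)} t^i\bigr)$ in $A_n[t]/\langle t^{n+1}\rangle$.

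First I would apply $\partial/\partial x^{(j)}$ to this equality. On the right, the ordinary polynomial chain rule yields
$$\partial_{x^{(j)}}\gn(f) \;=\; f'\!\left(\sum_{i=0}^n x^{(i)} t^i\right)\cdot t^j \;=\; t^j\,\gn(\partial_x f)\;=\;\sum_{k=0}^n d_k(\partial_x f)\, t^{k+j}\pmod{t^{n+1}}.$$
On the left, $\partial_{x^{(j)}}$ acts coefficient-wise in $t$, so that $\partial_{x^{(j)}}\gn(f)=\sum_{k=0}^n \partial_{x^{(j)}}(d_k(f))\, t^k$. Then I would simply compare the coefficient of $t^k$ in these two expressions: for every pair $(j,k)$ with $0\leq j\leq k\leq n$ this comparison gives precisely
$$\partial_{x^{(j)}}(d_k(f))\;=\;d_{k-j}(\partial_x f),$$
which is the desired identity.

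I do not expect any serious obstacle, since the argument is purely formal once the generating-function description of $d_k$ is in place. The only minor point worth checking is that reduction modulo $t^{n+1}$ is compatible with the action of $\partial_{x^{(j)}}$, and this is immediate because $\partial_{x^{(j)}}$ only affects the $A_n$-coefficients of powers of $t$. An entirely elementary alternative would be induction on $\deg f$, reducing to $f=x^N$ and using the HS-product rule to compute $d_k(x^N)$ explicitly; however, this route becomes combinatorially heavy, whereas the generating-function argument is both shorter and more transparent.
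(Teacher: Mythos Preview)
Your argument is correct. The generating-function identity $\gn(f)=f\bigl(\sum_i x^{(i)}t^i\bigr)$ is exactly the content of \cite[Proposition~5.1]{V} for $A=\K[x]$, the polynomial chain rule $D(f(u))=f'(u)\,D(u)$ holds for any $\K$-derivation in any characteristic, and comparing $t^k$-coefficients is legitimate because $\partial_{x^{(j)}}$ acts coefficientwise and hence commutes with reduction modulo $t^{n+1}$. One can also work in $A_n[t]$ before truncating, which removes even this minor worry.

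Your route is genuinely different from the paper's. The paper proceeds by a double induction: outer induction on $k$ (using the HS product rule $d_k(x\cdot x^{i-1})=\sum_{l}x^{(k-l)}(x^{i-1})^{(l)}$) and inner induction on the exponent $i$ of the monomial $x^i$, with a further case split according to whether $k-j<j$ or $k-j\geq j$. This is precisely the ``combinatorially heavy'' alternative you mention and dismiss. Your generating-function approach packages all the $d_k$ simultaneously and replaces the explicit bookkeeping of multi-index sums by a single application of the chain rule; it is shorter, avoids the case distinction, and makes the identity transparent as ``differentiation under the substitution $x\mapsto \sum_i x^{(i)}t^i$ shifts the $t$-grading by $j$''. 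The paper's argument, by contrast, is entirely self-contained at the level of the HS relations and does not invoke the realization of $A_n$ as a polynomial ring, so it would transplant more directly to settings where an explicit substitution formula is unavailable.
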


\begin{proof}
It is enough to check the identity on monomials $x^i$, $i\geq1$. We proceed by induction on $k$. For $k=0$ we have that $j=0$. Then,
\[\begin{split}
\partial_{x^{(0)}}\big(d_0(x^i)\big)=&\partial_{x^{(0)}}\big((x^i)^{(0)}\big)=\partial_{x^{(0)}}\big((x^{(0)})^i\big)\\
=&i\big((x^{(0)})^{i-1}\big)=d_0\big(ix^{i-1}\big)=d_0\big(\partial_x(x^i)\big).
\end{split}\]
Now, assume that $\partial_{x^{(j)}}\circ d_l=d_{l-j}\circ \partial_x,$ for $0\leq j \leq l \leq k-1$. We show that:
\[
\partial_{x^{(j)}}\circ d_k=d_{k-j}\circ \partial_x.
\]
To prove this identity we use induction on $i$. Let $i=1$ and observe that:
 \begin{equation}\notag
\partial_{x^{(j)}}\big(x^{(k)}\big)=
\left\{ \begin{array}{ll} 
0 & \textnormal{ if } k\neq j \\
1  & \textnormal{ if } k=j \\
\end{array} \right.,
\end{equation}
\begin{equation}\notag
d_{k-j}\big(\partial_{x}(x)\big)=
\left\{ \begin{array}{ll} 
0 & \textnormal{ if } k\neq j \\
1  & \textnormal{ if } k=j \\
\end{array} \right..
\end{equation}
Assume that $\partial_{x^{(j)}}\big(d_k(x^t)\big)=d_{k-j}\big(\partial_x(x^t)\big),1\leq t\leq i-1.$ Set $0\leq j\leq k$ and let $x^i\in\K[x].$  Applying the properties of HS derivations and the Leibniz rule we obtain:
\[\begin{split}
\partial_{x^{(j)}}\big(d_k(x^i)\big)=&\partial_{x^{(j)}}\big(d_k(xx^{i-1})\big)=\partial_{x^{(j)}}\big(\sum_{l=0}^kx^{(k-l)}(x^{i-1})^{(l)}\big)\\
=&\sum_{l=0}^k\Big((x ^{i-1})^{(l)}\partial_{x^{(j)}}\big(x^{(k-l)}\big)+x^{(k-l)}\partial_{x^{(j)}}\big((x^{i-1})^{(l)}\big)\Big)\\
=&(x ^{i-1})^{(k-j)}\partial_{x^{(j)}}\big(x^{(j)}\big)+\sum_{l=0}^k x^{(k-l)}\partial_{x^{(j)}}\big((x^{i-1})^{(l)}\big)\\
=&(x ^{i-1})^{(k-j)}+\sum_{l=j}^k x^{(k-l)}\partial_{x^{(j)}}\big((x^{i-1})^{(l)}\big)\\
\end{split}\]
We have two cases: $k-j<j$ and $k-j\geq j.$ First suppose that $k-j<j$. Applying the induction hypothesis on $k$ and $i$ we get
\[\begin{split}
\partial_{x^{(j)}}\big(d_k(x^i)\big)&=(x ^{i-1})^{(k-j)}+\sum_{l=j}^kx^{(k-l)}\partial_{x^{(j)}}\big((x^{i-1})^{(l)}\big)\\
=&(x ^{i-1})^{(k-j)}+\sum_{l=j}^kx^{(k-l)}d_{l-j}\big((i-1)(x^{i-2})\big)\\
=&(x ^{i-1})^{(k-j)}+(i-1)\sum_{l=j}^k\sum_{j_1+\ldots+j_{i-2}=l-j}x^{(k-l)}x^{(j_1)}\cdots x^{(j_{i-2})}\\
=&(x ^{i-1})^{(k-j)}+(i-1)\sum_{r=0}^{k-j}\sum_{r+j_1+\ldots+j_{i-2}=k-j}x^{(r)}x^{(j_1)}\cdots x^{(j_{i-2})}\\
=&(x ^{i-1})^{(k-j)}+(i-1)(x ^{i-1})^{(k-j)}\\
=&(1+i-1)(x ^{i-1})^{(k-j)}=(ix ^{i-1})^{(k-j)}=d_{k-j}(\partial_x(x^i)).
\end{split}\]
Now, suppose that $k-j\geq j.$ We obtain
\[\begin{split}
\partial_{x^{(j)}}\big(d_k(x^i)\big)=&(x ^{i-1})^{(k-j)}+\sum_{l=j}^kx^{(k-l)}\partial_{x^{(j)}}\big((x^{i-1})^{(l)}\big)\\
=&(x ^{i-1})^{(k-j)}+x^{(j)}\partial_{x^{(j)}}\big((x ^{i-1})^{(k-j)}\big)\\
&+\sum_{\substack{l=j\\l\neq k-j}}^kx^{(k-l)}\partial_{x^{(j)}}\big((x^{i-1})^{(l)}\big).
\end{split}\]
Applying the induction hypothesis on $k$ and $i$ we get:
\[\begin{split}
\partial_{x^{(j)}}\big(d_k(x^i)\big)=&(x ^{i-1})^{(k-j)}+x^{(j)}d_{k-2j}\big((i-1)x ^{i-2}\big)\\
+&\sum_{\substack{l=j\\l\neq k-j}}^kx^{(k-l)}d_{l-j}\big((i-1)x^{i-2}\big)\\
=&(x ^{i-1})^{(k-j)}+(i-1)\sum_{j_1+\ldots+j_{i-2}=k-2j}x^{(j)}x^{(j_1)}\cdots x^{(j_{i-2})}\\
+&(i-1)\sum_{\substack{l=j\\l\neq k-j}}^k\sum_{j_1+\ldots+j_{i-2}=l-j}x^{(k-l)}x^{(j_1)}\cdots x^{(j_{i-2})}\\
=&(x ^{i-1})^{(k-j)}+(i-1)\sum_{j+j_1+\ldots+j_{i-2}=k-j}x^{(j)}x^{(j_1)}\cdots x^{(j_{i-2})}\\
+&(i-1)\sum_{\substack{r=0\\r\neq j}}^{k-j}\sum_{r+j_1+\ldots+j_{i-2}=k-j}x^{(r)}x^{(j_1)}\cdots x^{(j_{i-2})}\\
=&(x ^{i-1})^{(k-j)}+(i-1)\Big(\sum_{j+j_1+\ldots+j_{i-2}=k-j}x^{(j)}x^{(j_1)}\cdots x^{(j_{i-2})}\\
+&\sum_{\substack{r=0\\r\neq j}}^{k-j}\sum_{r+j_1+\ldots+j_{i-2}=k-j}x^{(r)}x^{(j_1)}\cdots x^{(j_{i-2})}\Big)\\
=&(x ^{i-1})^{(k-j)}+(i-1)\Big(\sum_{r=0}^{k-j}\sum_{r+j_1+\ldots+j_{i-2}=k-j}x^{(r)}x^{(j_1)}\cdots x^{(j_{i-2})}\Big)\\
=&(x ^{i-1})^{(k-j)}+(i-1)(x ^{i-1})^{(k-j)}\\
=&(1+i-1)(x ^{i-1})^{(k-j)}\\
=&(ix ^{i-1})^{(k-j)}\\
=&d_{k-j}(\partial_x(x^i)).
\end{split}\]
\end{proof}

Now we can prove the identity used in the proof of corollary \ref{fdbd}.

\begin{pro}\label{comp deriv}
Let $k\in\{0,\ldots,n\}$, $j\in\{0,\ldots,k\}$ and $i\in\{1,\ldots,s\}$. Then 
\[\partial_{x_i^{(j)}}\circ d_{k}=d_{k-j}\circ \partial_{x_i}.\]
\end{pro}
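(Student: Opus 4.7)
The plan is to reduce the identity to monomials and then apply Lemma \ref{comp der 1} one variable at a time, using the Leibniz rule for Hasse--Schmidt derivations to decouple the variables.

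Both $\partial_{x_i^{(j)}}\circ d_k$ and $d_{k-j}\circ \partial_{x_i}$ are $\K$-linear, so it suffices to check the identity on an arbitrary monomial $m=x_1^{a_1}\cdots x_s^{a_s}$. First I would apply the Leibniz rule for the HS-derivation $d_k$ to expand
\[
d_k(m)=\sum_{k_1+\cdots+k_s=k} d_{k_1}(x_1^{a_1})\cdots d_{k_s}(x_s^{a_s}).
\]
The key observation is that $d_{k_l}(x_l^{a_l})$ lies in $\K[x_l^{(0)},\ldots,x_l^{(k_l)}]$, i.e.\ only involves the variables coming from $x_l$. Hence for $l\neq i$ the operator $\partial_{x_i^{(j)}}$ kills that factor, and for $l=i$ it acts only on the $i$-th factor by the ordinary product rule.

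Next I would apply Lemma \ref{comp der 1} to the $i$-th factor: for each $k_i\geq j$,
\[
\partial_{x_i^{(j)}}\bigl(d_{k_i}(x_i^{a_i})\bigr)=d_{k_i-j}\bigl(\partial_{x_i}(x_i^{a_i})\bigr),
\]
while for $k_i<j$ the partial derivative vanishes because $d_{k_i}(x_i^{a_i})$ does not involve the variable $x_i^{(j)}$. Combining these two facts with the product-rule computation above gives
\[
\partial_{x_i^{(j)}}\bigl(d_k(m)\bigr)=\sum_{\substack{k_1+\cdots+k_s=k\\ k_i\geq j}} d_{k_1}(x_1^{a_1})\cdots d_{k_i-j}\bigl(\partial_{x_i}(x_i^{a_i})\bigr)\cdots d_{k_s}(x_s^{a_s}).
\]

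Finally, I would perform the change of summation index $k_i'=k_i-j$ (with $k_i'\geq 0$), which converts the constraint $k_1+\cdots+k_s=k$, $k_i\geq j$ into $k_1+\cdots+k_i'+\cdots+k_s=k-j$. Reassembling via the Leibniz rule in the opposite direction yields
\[
\partial_{x_i^{(j)}}\bigl(d_k(m)\bigr)=d_{k-j}\Bigl(a_i\,x_1^{a_1}\cdots x_i^{a_i-1}\cdots x_s^{a_s}\Bigr)=d_{k-j}\bigl(\partial_{x_i}(m)\bigr),
\]
which is the desired identity. The only delicate point is the bookkeeping in the index shift, and in particular checking that the terms dropped because $k_i<j$ are exactly those that do not appear after the substitution $k_i'=k_i-j$; no genuine obstacle arises beyond the single-variable computation already handled in Lemma \ref{comp der 1}.
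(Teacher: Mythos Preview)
Your proof is correct and follows essentially the same approach as the paper: reduce to monomials by linearity, split off the $x_i$-part via the Hasse--Schmidt Leibniz rule, apply Lemma~\ref{comp der 1} to that factor, and reassemble after an index shift. The only cosmetic difference is that the paper groups the non-$x_i$ variables into a single factor $X^{\alpha}$ rather than splitting all $s$ variables separately, which slightly streamlines the bookkeeping but changes nothing substantive.
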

\begin{proof}
It is enough to check the desired equality on monomials. Let us denote $x_i^{\alpha_i}X^{\alpha}:=x_i^{\alpha_i}x_1^{\alpha_1}\cdots x_{i-1}^{\alpha_{i-1}}x_{i+1}^{\alpha_{i+1}}\cdots x_s^{\alpha_s}\in \K[x_1\dots,x_s]$.

\[\begin{split}
\partial_{x_i^{(j)}}(d_k(x_i^{\alpha_i}X^{\alpha}))
&=\partial_{x_i^{(j)}}\big(\sum_{l=0}^k(x_i^{\alpha_i})^{(l)}(X^{\alpha})^{(k-l)}\big)\\
&=\sum_{l=0}^k\partial_{x_i^{(j)}}\big((x_i^{\alpha_i})^{(l)}(X^{\alpha})^{(k-l)}\big)\\
&=\sum_{l=0}^k(X^{\alpha})^{(k-l)}\partial_{x_i^{(j)}}\big((x_i^{\alpha_i})^{(l)}\big)\\
&=\sum_{l=j}^k(X^{\alpha})^{(k-l)}\partial_{x_i^{(j)}}\big((x_i^{\alpha_i})^{(l)}\big).
\end{split}\]
By lemma \ref{comp der 1}, we have
\[
\begin{split}
\sum_{l=j}^j(X^{\alpha})^{(k-l)}\partial_{x_i^{(j)}}\big((x_i^{\alpha_i})^{(l)}\big)
&=\sum_{l=j}^k(X^{\alpha})^{(k-l)}d_{l-j}(\partial_{x_i}(x_i^{\alpha_i}))\\
&=\sum_{l=j}^k(X^{\alpha})^{(k-l)}\alpha_id_{l-j}(x_i^{\alpha_i-1})\\
&=\alpha_i\sum_{l'=0}^{k-j}(X^{\alpha})^{(k-j-l')}(x_i^{\alpha_i-1})^{(l')}\\
&=\alpha_id_{k-j}\big(X^{\alpha}x_i^{\alpha_i-1}\big)=d_{k-j}\big(X^{\alpha}\alpha_ix_i^{\alpha_i-1}\big)\\
&=d_{k-j}\big(\partial_{x_i}(x_i^{\alpha_i}X^{\alpha}))\big).
\end{split}
\]
\end{proof}

Next we explore corollary \ref{gral pd} in this context. 

\begin{coro}\label{pd}
Let $A$ be as before. Assume, in addition, that $A$ is a domain and a complete intersection. If $A_n$ is a domain then $\pd_{A_n}\big(\Omega^1_{A_n/\K}\big)\leq1.$
\end{coro}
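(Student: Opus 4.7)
The plan is to combine Corollary \ref{gral pd} with Corollary \ref{fdbd}. Concretely, I want to promote the standard presentation
$$\xymatrix{A^{r}\ar[rr]^{\Jac(f)^t}&&A^{s} \ar[r]&\Ome\ar[r]&0}$$
recorded at the start of Section \ref{F=Ome} into a short exact sequence
$$\xymatrix{0\ar[r]&A^{r}\ar[rr]^{\Jac(f)^t}&&A^{s} \ar[r]&\Ome\ar[r]&0}.$$
Once this is in hand, Corollary \ref{gral pd} with $F=\Ome$, $b=r$, $a=s$ yields $\pd_{A_n}(\Ome\otimes_A B_n)\leq 1$, and Corollary \ref{fdbd} identifies this with $\pd_{A_n}(\Omega^1_{A_n/\K})\leq 1$, which is the conclusion.

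The heart of the argument is therefore injectivity of the map $\Jac(f)^t\colon A^r\to A^s$. I would pass to the fraction field $K:=\operatorname{Frac}(A)$. Since $A$ is a domain, the natural map $A^r\to K^r$ is injective, so it suffices to show $\Jac(f)^t$ is injective after base change to $K$. Because $A$ is a complete intersection, $\dim A=s-r$, hence $\operatorname{tr.deg}_\K(K)=s-r$. Granted generic smoothness, $\Omega^1_{K/\K}$ is a $K$-vector space of dimension $s-r$, so the cokernel of $\Jac(f)^t\otimes_A K\colon K^r\to K^s$ has dimension $s-r$. By rank-nullity the source-map has rank $r$, i.e., it is injective, and injectivity descends to $A$.

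The main obstacle is justifying generic smoothness of $A$ over $\K$, i.e., that $K/\K$ is separably generated. Over a perfect field this is automatic (any finitely generated extension of a perfect field is separably generated), and under that hypothesis the argument above is complete. Over a non-perfect base one would either need to add a separability hypothesis or exploit the assumption that $A_n$ is a domain to rule out pathological inseparability; I expect the cleanest version of the statement is in the perfect-field (in particular characteristic zero) case, with the rest of the argument being purely formal.
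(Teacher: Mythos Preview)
Your approach is exactly the paper's: it simply asserts the short exact sequence ``by the hypothesis'' and then invokes Corollaries~\ref{gral pd} and~\ref{fdbd} without further comment. The paper does not address the generic-smoothness subtlety over non-perfect fields that you flag, so your argument is at least as complete as the original; in practice the intended setting is $\K$ perfect (cf.\ the standing assumption of algebraically closed $\K$ in Section~3), under which your rank computation for $\Jac(f)^t\otimes_A K$ goes through and the proof is complete.
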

\begin{proof}
By the hypothesis, we have the following exact sequence:
$$\xymatrix{0\ar[r]&A^{r}\ar[rr]^{\Jac(f)^t}&&A^{s} \ar[r]&\Ome\ar[r]&0}.$$
The result follows from corollaries \ref{gral pd} and \ref{fdbd}.
\end{proof}

M. Mustata proved that if the $n$-th jet scheme of a complete intersection is irreducible then it is also a complete intersection \cite[Proposition 1.4]{M}. Using the previous corollary we obtain the same result in a special case.

\begin{coro}\label{CI}
Consider the hypothesis of corollary \ref{pd}. Suppose that for each maximal ideal $\m\in \Sp(A_n)$, the extension $\K\subset\K(\m)$ is separable. Then $A_n$ is a complete intersection.
\end{coro}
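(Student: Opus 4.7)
The plan is to prove $\dim A_n = (n+1)(s-r)$, which immediately implies that $A_n$ is a complete intersection. Indeed, by Corollary \ref{fdbd} we have $A_n = P/I$, where $P := \K[x_i^{(j)}]_{1\le i\le s,\, 0\le j\le n}$ is a polynomial ring in $s(n+1)$ variables and $I$ is generated by the $r(n+1)$ elements $\{d_j(f_l)\}$. Hence $\dim A_n = (n+1)(s-r)$ forces $\operatorname{ht}(I) = r(n+1)$, which matches the number of generators, and since $P$ is Cohen--Macaulay this means $I$ is generated by a regular sequence.

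The lower bound $\dim A_n \ge (n+1)(s-r)$ is immediate from Krull's generalized principal ideal theorem. For the upper bound, I would revisit the proof of Corollary \ref{gral pd}, which actually produces the explicit free resolution
\[
0 \longrightarrow A_n^{r(n+1)} \xrightarrow{D_n(\Jac(f))^t} A_n^{s(n+1)} \longrightarrow \Omega^1_{A_n/\K} \longrightarrow 0,
\]
valid because $A_n$ is a domain and $D_n(\Jac(f))^t$ admits a nonzero maximal minor. Localizing at the generic point of $A_n$ and using that K\"ahler differentials commute with localization then yields
\[
\dim_K \Omega^1_{K/\K} = (n+1)(s-r), \qquad K := \operatorname{Frac}(A_n).
\]

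To convert this into a bound on $\dim A_n$, I would combine two standard facts: the identity $\dim A_n = \operatorname{trdeg}_\K K$, valid for any finitely generated $\K$-domain, and the inequality $\operatorname{trdeg}_\K K \le \dim_K \Omega^1_{K/\K}$ for finitely generated field extensions. This gives $\dim A_n \le (n+1)(s-r)$, which together with the Krull bound yields the desired equality and completes the proof. The place where the separability hypothesis enters most naturally is in certifying that the Matsumura-type inequality above is an equality, equivalently that $K/\K$ is separably generated: the assumption that $\K \subset \K(\mathfrak{m})$ is separable at every closed point of the Jacobson scheme $\operatorname{Spec}(A_n)$ is used to propagate separability to the residue field at the generic point. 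This propagation from the collection of closed points to the generic point is the most delicate step of the argument and is where I would expect the main technical work to lie.
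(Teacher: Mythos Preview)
Your argument is correct and takes a genuinely different route from the paper. The paper's proof is a one-line application of Ferrand's criterion \cite{F}: a finitely generated $\K$-algebra whose residue fields at closed points are separable over $\K$ is a complete intersection if and only if $\pd_{A_n}(\Omega^1_{A_n/\K})\leq 1$; Corollary~\ref{pd} supplies exactly this bound, and the proof ends. The separability hypothesis is present solely because Ferrand's theorem requires it.

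Your dimension count, by contrast, is already complete at the sentence ending ``\ldots yields the desired equality and completes the proof.'' The inequality $\operatorname{trdeg}_\K K \le \dim_K \Omega^1_{K/\K}$ holds for \emph{every} finitely generated field extension, separably generated or not; combined with the Krull bound $\dim A_n \ge (n+1)(s-r)$ it gives $\dim A_n = (n+1)(s-r)$, hence $\operatorname{ht}(I)=r(n+1)$ matches the number of generators of $I$, and since $P$ is Cohen--Macaulay these generators form a regular sequence. No separability is ever invoked, so you have in fact established the corollary \emph{without} its extra hypothesis---a strictly stronger conclusion, obtained by more elementary means than the paper's appeal to Ferrand.

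Your final paragraph is therefore a misdiagnosis. There is no ``delicate propagation step'' to carry out: you never need equality in the Matsumura bound, only the unconditional inequality. If anything, equality is \emph{output} rather than input: once $\dim A_n=(n+1)(s-r)$ is known, you deduce for free that $\operatorname{Frac}(A_n)$ is separably generated over $\K$.
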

\begin{proof}
Under these conditions, $A_n$ is a complete intersection if and only if $\pd_{A_n}\big(\Omega^{1}_{A_n/\K}\big)\leq1$, \cite[Th\'eor\`eme 2]{F}. The result follows from the previous corollary.
\end{proof}


\subsection{The module of high-order K\"ahler differentials}\label{omegam}

In this section we apply the presentation of Theorem \ref{presentation} to the particular case of the module of high-order differentials. We first recall the definition of this module. 

Throughout this section we fix $m,s\in\N$ and denote 
\[
\begin{split}
&\Lsc:=\{\beta\in\N^s|0\leq|\beta|\leq m-1\},\\
&\Ls:=\{\alpha\in\N^s|1\leq|\alpha|\leq m\}.
\end{split}
\]

Let $\K$ be a ring and $A$ be a $\K$-algebra. Denote $I_A:=\ker(A\otimes_{\K}A\rightarrow A,\mbox{ }a\otimes b\mapsto ab)$. Giving structure of $A$-module to $A\otimes_{\K}A$ by multiplying on the left, we define the $A$-module $$\Omega^{(m)}_{A/\K}:=I_A/I_A^{m+1}.$$ 

\begin{defi}\cite[Chapter II-1]{N}
The $A$-module $\Omega^{(m)}_{A/k}$ is called the \textit{module of K\"ahler differentials of order} $m\geq1$ of $A$ over $\K$. For $m=1$ this is the usual module of K\"ahler differentials.
\end{defi}

The module of high-order differentials of a finitely generated $\K$-algebra also has a presentation given by the so-called higher-order Jacobian matrix.

\begin{defi}\cite{D2,BD,BJNB}\label{Jacm}
Let $\K$ be a field and $f\in\pols$. Denote
$$\Jac_m(f):=\Big(\frac{1}{(\alpha-\beta)!}\frac{\partial^{\alpha-\beta}(f)}{\partial x^{\alpha-\beta}}\Big)_{\substack{\beta\in\Lsc \\ \alpha\in\Ls}},$$
where we define $\frac{1}{(\alpha-\beta)!}\frac{\partial^{\alpha-\beta}(f)}{\partial x^{\alpha-\beta}}=0,$ whenever $\alpha_i<\beta_i$ for some $i$. Now let $f=(f_1,\ldots,f_r)\in(\pols)^r$. Denote 
\[\Jac_{m}(f):=
\left( 
\begin{array}{cccc}
\Jac_{m}(f_1) \\
\Jac_{m}(f_2)\\
\vdots \\
\Jac_{m}(f_r)
\end{array} 
\right).\]
We call $\Jac_{m}(f)$ the \textit{Jacobian matrix of order} $m\geq1$ of $f_1,\ldots,f_r$. It is a $(rM\times N)$-matrix, where $M=\binom{m+s-1}{s}$ and $N=\binom{m+s}{s}-1$. 
\end{defi}

\begin{exam}\label{e3-1}
Let $f=x^3-y^2\in \K[x,y]$. Then 
\[\Jac_2(f)=
\left( 
\begin{array}{ccccc}
             3x^2& -2y & 3x & 0 & -1 \\
             f & 0 & 3x^2 & -2y & 0 \\
             0 & f & 0 & 3x^2 & -2y\\
\end{array} 
\right).\] 
\end{exam}

\begin{rem}
The higher-order Jacobian matrix has proved useful in the study of properties of $\Omega^{(m)}$ and of the higher Nash blowup \cite{BD,BJNB,ChDG,D2,Ch}.
\end{rem}

The Jacobian matrix of order $m$ has the following analogous property of the usual Jacobian matrix.

\begin{teo}\cite[Theorem 2.8]{BD},\cite[Corollary 2.27]{BJNB}\label{pres om}
Let $A$ denote the quotient $\pols/\langle f_1,\ldots,f_r \rangle$. The $A$-module $\OmeA$ has the following presentation,
$$\xymatrix{A^{rM}\ar[rr]^{\Jac_m(f)^t}&&A^{N} \ar[r]&\OmeA\ar[r]&0}.$$
\end{teo}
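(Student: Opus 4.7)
My plan is to reduce to the polynomial-ring case and then use a conormal-style exact sequence, reading off the relation map in coordinates via Taylor's formula. Let $R = \pols$, let $d_i := x_i\otimes 1 - 1\otimes x_i \in I_R := \ker(R\otimes_\K R \to R)$, and write $d^\alpha := d_1^{\alpha_1}\cdots d_s^{\alpha_s}$. First I would recall (or check directly) that the $R$-module $\Omega^{(m)}_{R/\K} = I_R/I_R^{m+1}$ is free with basis $\{d^\alpha : \alpha \in \Ls\}$, by expanding elements of $R \otimes R$ as formal Taylor series in the $d_i$ with coefficients in $1 \otimes R$. In particular its rank is $N = \binom{m+s}{s}-1$.

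Next, let $J = \langle f_1,\ldots,f_r\rangle$ and $K = J\otimes R + R\otimes J$, so that $A \otimes_\K A = (R\otimes R)/K$. From $I_A = (I_R+K)/K$ and $I_A^{m+1} = (I_R^{m+1}+K)/K$ I would obtain a surjection $\Omega^{(m)}_{R/\K}\otimes_R A \twoheadrightarrow \OmeA$ whose kernel is the image of $I_R \cap K$ modulo $I_R^{m+1} + J\,\Omega^{(m)}_{R/\K}$. By the previous step the source is $A^N$ with basis $\{d^\alpha\}_{\alpha\in\Ls}$, so what remains is to identify the relation submodule.

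To compute the relation map I would expand the generating relations $d^\beta(f_l\otimes 1 - 1\otimes f_l)$ for $\beta\in\Lsc$ and $l=1,\ldots,r$. Writing $y_i := 1\otimes x_i$ so that $x_i\otimes 1 = y_i + d_i$, Taylor's formula in $R\otimes R$ gives
\[
f_l\otimes 1 - 1\otimes f_l = \sum_{|\alpha|\geq 1}\frac{1}{\alpha!}\,\partial^\alpha f_l(y)\,d^\alpha.
\]
Multiplying by $d^\beta$ and reducing modulo $I_R^{m+1}$ keeps only indices with $|\alpha+\beta|\leq m$; reindexing $\gamma = \alpha+\beta \in \Ls$, the coefficient of $d^\gamma$ is exactly $\frac{1}{(\gamma-\beta)!}\partial^{\gamma-\beta}f_l$, which is the $((l,\beta),\gamma)$-entry of $\Jac_m(f)$. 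Thus the column of the relation map indexed by $(l,\beta)$ is the corresponding column of $\Jac_m(f)^t$.

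The main obstacle will be the final step: showing that the shifts $d^\beta(f_l\otimes 1 - 1\otimes f_l)$, for $\beta\in\Lsc$ and $l=1,\ldots,r$, exhaust all relations modulo $J\,\Omega^{(m)}_{R/\K}$. The plan is to exploit that $K$ is generated as an $R\otimes R$-module by $f_l\otimes 1$ and $1\otimes f_l$: writing $1\otimes f_l = f_l\otimes 1 - (f_l\otimes 1 - 1\otimes f_l)$ converts any element of $K$ into an element of $J(R\otimes R)$ plus an $R\otimes R$-linear combination of the canonical generators $f_l\otimes 1 - 1\otimes f_l \in I_R$. After intersecting with $I_R$ and projecting to $\Omega^{(m)}_{R/\K}\otimes_R A$, the first summand dies, while the second becomes an $A$-linear combination of the shifts $d^\beta(f_l\otimes 1 - 1\otimes f_l)$ expanded in the $d$-basis, since modulo $I_R^{m+1}$ only multipliers $d^\beta$ with $|\beta|\leq m-1$ can contribute. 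The subtlety lies in tracking the difference between the left and right $A$-module structures on $I_A/I_A^{m+1}$, which coincide only up to lower-order terms in the $d_i$; once this bookkeeping is in place, the induced relation map has matrix exactly $\Jac_m(f)^t$, yielding the required presentation.
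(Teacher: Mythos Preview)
The paper does not supply its own proof of this statement; it is quoted verbatim from \cite{BD} and \cite{BJNB}. So there is nothing in the present paper to compare against.

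That said, your outline is essentially the argument of \cite{BD}: one shows $\Omega^{(m)}_{R/\K}$ is free on the monomials $d^\alpha$, then identifies the kernel of the natural surjection onto $\OmeA$ as the $A$-span of the Taylor expansions of $d^\beta(f_l\otimes 1-1\otimes f_l)$, $\beta\in\Lsc$. Two small points deserve attention. First, since the paper (and \cite{BD,BJNB}) allow arbitrary characteristic, the symbols $\tfrac{1}{\alpha!}\partial^{\alpha}$ must be read as Hasse--Schmidt (divided--power) differential operators; your Taylor identity is then literally true for polynomials over any field, but you should say so. Second, and more substantively, the paper fixes the \emph{left} $A$-module structure on $I_A/I_A^{m+1}$, whereas your expansion
\[
f_l\otimes 1-1\otimes f_l=\sum_{|\alpha|\ge 1}\tfrac{1}{\alpha!}\partial^{\alpha}f_l(y)\,d^{\alpha}
\]
has coefficients in $1\otimes R$, i.e.\ uses the right structure. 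Converting to left coefficients introduces signs $(-1)^{|\alpha|}$ (since $1\otimes x_i=x_i\otimes 1-d_i$), so the matrix you obtain is $\Jac_m(f)$ only after rescaling the basis by $d^{\alpha}\mapsto(-1)^{|\alpha|}d^{\alpha}$ or, equivalently, after replacing $d_i$ by $-d_i$. This is harmless for the cokernel and is precisely the ``subtlety'' you flag at the end, but it should be made explicit rather than left as a remark; otherwise the displayed matrix does not literally match Definition~\ref{Jacm}. Your argument that the relations $d^{\beta}\delta_{f_l}$ generate is correct: the isomorphism $R\otimes_{\K} R\cong R[d_1,\dots,d_s]$ shows every $(R\otimes R)$-multiple of $\delta_{f_l}$ is an $R$-combination of the $d^{\gamma}\delta_{f_l}$, and modulo $I_R^{m+1}$ only $|\gamma|\le m-1$ survive.
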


Thus, Theorem \ref{presentation} gives,

\begin{coro}\label{pres ome}
The $A_n$-module $\OmeA\otimes_A B_n$ has the following presentation:
\[\xymatrix{\big(A_n^{rM}\big)^{n+1}\ar[rrr]^{D_n(\Jac_m(f))^t}&&&\big(A_n^{N}\big)^{n+1} \ar[r]&\OmeA\otimes B_n\ar[r]&0}.\]
\end{coro}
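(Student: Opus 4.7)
The plan is to combine directly the two main ingredients already at hand: the general presentation theorem (Theorem \ref{presentation}) and the explicit presentation of the module of high-order differentials via the higher-order Jacobian matrix (Theorem \ref{pres om}).

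First, I would recall from Theorem \ref{pres om} that the $A$-module $\OmeA$ fits into the exact sequence
\[
\xymatrix{A^{rM}\ar[rr]^{\Jac_m(f)^t}&&A^{N}\ar[r]&\OmeA\ar[r]&0},
\]
so that we have an $A$-presentation of $\OmeA$ of exactly the shape required by Theorem \ref{presentation}, with $b=rM$, $a=N$, and $L=\Jac_m(f)$.

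Next, I would apply Theorem \ref{presentation} to this presentation. This directly yields the exact sequence of $A_n$-modules
\[
\xymatrix{\big(A_n^{rM}\big)^{n+1}\ar[rrr]^{D_n(\Jac_m(f))^t}&&&\big(A_n^{N}\big)^{n+1}\ar[r]&\OmeA\otimes_A B_n\ar[r]&0},
\]
which is exactly the presentation claimed. Since the two invoked results are already proved in the paper, there is essentially no obstacle beyond matching the notation ($b \leftrightarrow rM$, $a \leftrightarrow N$, $L \leftrightarrow \Jac_m(f)$) and observing that the $A_n$-module structure on $\OmeA\otimes_A B_n$ is, by construction, the one induced by the $A_n$-module structure of $B_n$ used in Theorem \ref{presentation}. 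Hence the corollary follows immediately.
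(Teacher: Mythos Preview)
Your proposal is correct and matches the paper's own argument exactly: the corollary is obtained by applying Theorem \ref{presentation} to the presentation of $\OmeA$ given in Theorem \ref{pres om}, with the identifications $b=rM$, $a=N$, $L=\Jac_m(f)$. There is nothing to add.
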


In addition, we have the following consequence in the case of hypersurfaces.

\begin{coro}\label{pres ome hyper}
Let $f\in\pols$ be irreducible, $A=\pols/\langle f \rangle.$ If $A_n$ is a domain then $\pd_{A_n}\big(\OmeA\otimes_AB_n\big)\leq 1.$
\end{coro}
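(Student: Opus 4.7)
The plan is to reduce the corollary to Corollary \ref{gral pd}. Theorem \ref{pres om} (with $r=1$) already provides the presentation
\[
A^{M}\xrightarrow{\Jac_m(f)^t} A^{N}\to \OmeA\to 0.
\]
Since $f$ is irreducible, $A$ is a domain, so Corollary \ref{gral pd} yields $\pd_{A_n}(\OmeA\otimes_A B_n)\leq 1$ as soon as the above is upgraded to a short exact sequence $0\to A^{M}\to A^{N}\to\OmeA\to 0$, i.e., as soon as $\Jac_m(f)^t$ is known to be injective.

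To establish injectivity I would pass to the generic point and perform a rank count. Since $A$ is a domain, it suffices to exhibit a single $M\times M$-minor of $\Jac_m(f)$ that is non-zero in $A$, and this in turn amounts to showing that $\Jac_m(f)^t\otimes_A K(A)$ has full column rank $M$, where $K(A)$ is the fraction field of $A$. I would extract from the hypothesis ``$A_n$ is a domain'' the non-degeneracy of $f$ needed to ensure that $K(A)/\K$ is separably generated of transcendence degree $s-1$ (in particular ruling out $f\in \K[x_1^p,\ldots,x_s^p]$ in positive characteristic). Consequently $\Omega^{(m)}_{K(A)/\K}$ has $K(A)$-dimension $\binom{m+s-1}{s-1}-1 = N-M$, and localising the presentation at the generic point forces $\Jac_m(f)^t\otimes_A K(A)$ to have rank exactly $M$. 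Injectivity then descends from $K(A)$ to the domain $A$.

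With the short exact sequence in hand, the argument from the proof of Corollary \ref{gral pd} applies directly: the chosen $M\times M$-minor of $\Jac_m(f)$, together with the block upper-triangular shape of $D_n(\Jac_m(f))$ and the injectivity of $d_0\colon A\hookrightarrow A_n$, yields a non-vanishing $(n+1)M\times(n+1)M$-minor of $D_n(\Jac_m(f))$ in the domain $A_n$. Combined with the presentation from Corollary \ref{pres ome}, this exhibits a free resolution of $\OmeA\otimes_A B_n$ over $A_n$ of length $1$, giving the bound on projective dimension.

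The main obstacle is the injectivity step, and specifically the translation of the algebraic hypothesis ``$A_n$ is a domain'' into the separability of $K(A)/\K$ that underlies the generic rank count; once that geometric input is available, the rest is a direct application of the machinery developed in Section \ref{main}.
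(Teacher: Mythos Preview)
Your plan coincides with the paper's at the structural level: obtain the short exact sequence
\[
0\longrightarrow A^{M}\xrightarrow{\ \Jac_m(f)^t\ } A^{N}\longrightarrow\OmeA\longrightarrow 0
\]
and then apply Corollary~\ref{gral pd}. The paper's proof, however, is two lines: it simply cites \cite[Theorem~3.6]{BD} for this short exact sequence (stated there for an irreducible hypersurface) and then invokes Corollary~\ref{gral pd}. In particular, the hypothesis ``$A_n$ is a domain'' is used \emph{only} through Corollary~\ref{gral pd}; it plays no role in securing the injectivity of $\Jac_m(f)^t$, which is imported wholesale from \cite{BD}.

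The detour you propose---deducing that $K(A)/\K$ is separably generated from ``$A_n$ is a domain'' and then running a generic-rank count---is both unnecessary and, as stated, not valid. Over an imperfect base field the implication fails: take $\K=\mathbb{F}_p(t)$ with $p$ odd and $f=x^p+ty^p\in\K[x,y]$. Then $f$ is irreducible, and $d_i(f)=0$ for $1\le i\le p-1$, so $A_1=A[x^{(1)},y^{(1)}]$ is a polynomial ring over the domain $A$; yet $K(A)/\K$ is not separably generated (and indeed $\Jac_1(f)=0$). Hence the ``main obstacle'' you isolate cannot be resolved along the route you sketch. The paper sidesteps the issue entirely by appealing to \cite[Theorem~3.6]{BD}; whatever base-field or nondegeneracy assumptions are needed for the injectivity of $\Jac_m(f)^t$ are absorbed into that reference rather than extracted from the jet-scheme hypothesis.
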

\begin{proof}
We have the following exact sequence \cite[Theorem 3.6]{BD}:
$$\xymatrix{0\ar[r]&A^{M}\ar[rr]^{\Jac_m(f)^t}&&A^{N} \ar[r]&\OmeA\ar[r]&0}.$$
The result follows from corollary \ref{gral pd}.
\end{proof}

\begin{rem}\label{no isom}
It is tempting to ask whether the isomorphism of corollary \ref{fdbd} also holds for the module of high-order differentials. Unfortunately, a straightforward computation of ranks shows that the desired isomorphism is false for $m>1$. For instance, if $A=\K[x]$, $n=1$, $m=2$, then $\Omega^{(2)}_{A_1}$ is free of rank 5 but $\Omega^{(2)}_{A}\otimes B_1$ is free of rank 4.
\end{rem}


\section{A Nobile-like theorem}

In this section we study the Nash transformation of $\OB$ in the case of hypersurfaces. We show that it shares some nice properties of Nash blowups. To that effect, we use a method proposed by O. Villamayor to explicitly compute these blowups in terms of presentations \cite{Vi}. All varieties are considered over an algebraically closed field $\K$ of arbitrary characteristic.

\begin{lem}\label{jac crit}
Let $X=\V(f)\subset\A^{s}_{\K}$ be an irreducible hypersurface and assume that $J_n(X)\subset \A^{s(n+1)}_{\K}$ is also irreducible, for some $n$. Consider a point $p=(x_1,\ldots,x_s,x_1^{(1)},\ldots,x_s^{(1)},\ldots)\in J_n(X)$. Then $p$ is non-singular if and only if $\rk D_n(\Jac_m(f)|_{p})=(n+1)M$.
\end{lem}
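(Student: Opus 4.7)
The plan is to reduce the assertion to the ordinary Jacobian criterion applied to the complete-intersection defining equations of $J_n(X)$, and then to transfer the resulting rank condition from the usual Jacobian to $\Jac_m(f)$ by exploiting the block upper-triangular shape of $D_n(\cdot)$.

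First, since $J_n(X)$ is assumed irreducible, Mustata's theorem \cite[Proposition 1.4]{M} yields that $J_n(X)$ is a complete intersection in $\A^{s(n+1)}_{\K}$ cut out by the $n+1$ polynomials $d_0(f),\ldots,d_n(f)$. The usual Jacobian criterion (valid over any algebraically closed field for complete intersections) then says that $p$ is non-singular on $J_n(X)$ if and only if $\Jac(d_0(f),\ldots,d_n(f))|_p$ has full row rank $n+1$. By Corollary \ref{fdbd}, this Jacobian coincides with $D_n(\Jac(f))$, so the smoothness of $p$ is equivalent to $\rk D_n(\Jac(f))|_p = n+1$.

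The next step is a short linear-algebra lemma that I would prove by induction on $n$: for any matrix $L$ with $M_L$ rows and entries in $A$,
\[
\rk D_n(L)|_p = (n+1)M_L \iff \rk L|_{p_0}=M_L.
\]
Using the block decomposition of $D_n(L)|_p$ with top-left block $L|_{p_0}$, zero bottom-left block, and bottom-right block $D_{n-1}(L)|_p$, the inductive step reduces to showing that any relation among the top $M_L$ rows which lies in the row span of the bottom $nM_L$ rows must project, in the first block column, to a relation among the rows of $L|_{p_0}$; this is ruled out precisely when $\rk L|_{p_0}=M_L$. Applied to $L=\Jac(f)$ and $L=\Jac_m(f)$, this yields $\rk D_n(\Jac(f))|_p=n+1 \iff \Jac(f)|_{p_0}\neq 0$ and $\rk D_n(\Jac_m(f))|_p=(n+1)M \iff \rk \Jac_m(f)|_{p_0}=M$, respectively.

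Finally, the chain of equivalences closes via the known fact that for a hypersurface $V(f)\subset\A^s_{\K}$, a point $p_0\in V(f)$ is non-singular if and only if $\rk \Jac_m(f)|_{p_0}=M$. One implication follows from the observation that on the smooth locus $\Omega^{(m)}_A$ is locally free of the expected rank $N-M$, so by Theorem \ref{pres om} the presentation matrix $\Jac_m(f)$ attains its full row rank $M$ there. The converse can be extracted from results in \cite{BD,BJNB}, or proved directly by using that all first-order partials of $f$ vanish at a singular point of $V(f)$ and then tracking the combinatorial shape of $\Jac_m(f)$ to exhibit a strict rank drop. I expect this last equivalence to be the main technical hurdle of the proof; the block-triangular rank argument and the reduction to Mustata's theorem are otherwise routine.
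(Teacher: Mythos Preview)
Your proposal is correct and follows essentially the same route as the paper's proof: reduce to the ordinary Jacobian criterion for $J_n(X)$ (the paper computes $\dim J_n(X)=(s-1)(n+1)$ directly rather than citing \cite{M}), use the block upper-triangular shape of $D_n(\cdot)$ to pass between full row rank of $D_n(L)|_p$ and of $L|_{p_0}$, and close the chain via the equivalence ``$p_0$ smooth on $X\iff\rk\Jac_m(f)|_{p_0}=M$''. The only cosmetic difference is that the paper proves this last equivalence on the spot (using \cite[Lemma 2.11]{BD} for the forward direction and a direct inspection of the rows of $\Jac_m(f)$ with $|\beta|=m-1$ for the converse), whereas you package it as a citation to \cite{BD,BJNB}.
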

\begin{proof}
Firstly, $J_n(X)$ irreducible implies $\dim J_n(X)=(s-1)(n+1)$. Assume that $p$ is non-singular. Recall that $J_n(X)=\V(f,d_1(f),\ldots,d_n(f))$. The Jacobian matrix of $f,d_1(f),\ldots,d_n(f)$ has the following shape:
\[
\left( 
\begin{array}{cccccccc}
\partial_{x_1}f 		& \cdots 		& \partial_{x_s}f 				& 0 							&\cdots \\
\partial_{x_1}d_1(f) 	& \cdots        &\partial_{x_s}d_1(f)			& \partial_{x_1^{(1)}}d_1(f) 	& \cdots 		& \partial_{x_s^{(1)}}d_1(f) 	&0							&\cdots\\
\partial_{x_1}d_2(f) 	& \cdots       	&\partial_{x_s}d_2(f)			&\partial_{x_1^{(1)}}d_2(f) 	& \cdots 		& \partial_{x_s^{(1)}}d_2(f) 	&\partial_{x_1^{(2)}}d_2(f)		&\cdots\\
\vdots 				& 	 		& \vdots 						& \vdots 				\\		
\partial_{x_1}d_n(f) 	& \cdots        &\partial_{x_s}d_n(f)			& \partial_{x_1^{(1)}}d_n(f) 	& \cdots 		& \partial_{x_s^{(1)}}d_n(f) 	&\cdots
\end{array} 
\right).
\] 
By the usual Jacobian criterion we have $$\rk(\Jac(f,d_1(f),\ldots,d_n(f))|_p)=s(n+1)-(s-1)(n+1)=n+1.$$
In particular, $\partial_{x_j}f(p)\neq0$ for some $j$. After renaming the variables, if necessary, we may assume $j=1$. It is known that, in this case, $\Jac_m(f)$ can be ordered so that it has row echelon form with $\partial_{x_1}(f)$ as pivots \cite[Lemma 2.11]{BD}. Then, by the shape of $D_n(\Jac_{m}(f))$, we obtain $\rk D_n(\Jac_{m}(f))|_p=M(n+1)$.

Now suppose that $\rk D_n(\Jac_{m}(f))|_p=M(n+1)$. Again by the shape of this matrix, it follows that $\partial_{x_j}f(x_1,\ldots,x_s)=\partial_{x_j}f(p)\neq0$, for some $j$. Then $(x_1,\ldots,x_s)\in X$ is non singular. Thus, $p\in J_n(X)$ is also non singular.
\end{proof}

\begin{rem}
Notice that the proof of the previous lemma shows a little more: the statement is also equivalent to $X$ being non-singular at $q=\pi_n(p)$, where $\pi_n:J_n(X)\to X$ is the natural projection. Indeed, if $p$ is non-singular it was shown that $\partial_{x_j}f(q)=\partial_{x_j}f(p)\neq0$ for some $j$. Hence $q$ is non-singular. On the other hand, it is known that $q$ non-singular implies $p$ is non-singular.
\end{rem}

The following theorem can be seen as a generalization, in a special case, of a well-known theorem due to A. Nobile \cite{No}. There are other generalizations of Nobile's theorem \cite{D1,D2,ChDG,DN1,DN2}.

\begin{teo}\label{Nobile}
Let $X\subset\A^s_{\K}$ be an irreducible singular hypersurface and assume that $J_n(X)$ is irreducible and normal. Then the blowup of $J_n(X)$ at $\OmeA\otimes_AB_n$ is not an isomorphism.
\end{teo}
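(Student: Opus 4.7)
The strategy is to argue by contradiction, combining the length-one free resolution of $\OmeA \otimes_A B_n$ from Section~\ref{main} with Villamayor's description of the Nash transformation of a sheaf as a blowup at its Fitting ideal \cite{Vi}, and then extracting a codimension conflict from the normality hypothesis.

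First, since $J_n(X)$ is irreducible and normal, $A_n$ is a domain, so Corollary~\ref{pres ome hyper} furnishes the exact sequence
\[
0 \longrightarrow (A_n^M)^{n+1} \xrightarrow{D_n(\Jac_m(f))^t} (A_n^N)^{n+1} \longrightarrow \OmeA \otimes_A B_n \longrightarrow 0.
\]
Thus $\OmeA \otimes_A B_n$ has generic rank $(n+1)(N-M)$. Let $\mathcal{I} \subset A_n$ denote the Fitting ideal generated by the maximal $(n+1)M$-minors of $D_n(\Jac_m(f))^t$; this ideal cuts out the non-locally-free locus of $\OmeA \otimes_A B_n$. Villamayor's method, applied to the presentation above, identifies the Nash transformation of $\OmeA \otimes_A B_n$ with the blowup $\operatorname{Bl}_{\mathcal{I}}(J_n(X))$, so it suffices to show that $\mathcal{I}$ is not invertible on $J_n(X)$.

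Next I would identify $V(\mathcal{I})$ set-theoretically with $\Sing(J_n(X))$: by Lemma~\ref{jac crit}, a closed point $p \in J_n(X)$ is non-singular if and only if $\rk D_n(\Jac_m(f))|_p = (n+1)M$, equivalently, if and only if some maximal minor of $D_n(\Jac_m(f))^t$ does not vanish at $p$, i.e.\ $p \notin V(\mathcal{I})$. This locus is non-empty: the singularity hypothesis on $X$ furnishes $q \in \Sing(X)$, and the remark following Lemma~\ref{jac crit} then guarantees that the constant $n$-jet at $q$ is a singular point of $J_n(X)$, which lies in $V(\mathcal{I})$.

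Finally I would invoke Serre's $R_1$ criterion to close the argument. If $\operatorname{Bl}_{\mathcal{I}}(J_n(X)) \to J_n(X)$ were an isomorphism, $\mathcal{I}$ would be an invertible ideal of $A_n$; being proper and non-zero on the domain $A_n$, it would cut out an effective Cartier divisor, forcing $V(\mathcal{I})$ to have pure codimension one. But $J_n(X)$ is normal, so the $R_1$ condition forces its singular locus, which equals $V(\mathcal{I})$, to have codimension at least two, a contradiction. The main technical point I anticipate is checking carefully that Villamayor's machinery applies to our presentation and identifies the Nash transformation precisely with $\operatorname{Bl}_{\mathcal{I}}(J_n(X))$; once that identification is in place, the remainder is a standard Fitting-ideal and Serre-criterion computation.
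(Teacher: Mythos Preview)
Your proof is correct and follows essentially the same route as the paper: both build the length-one resolution, invoke Villamayor to pass to the Fitting ideal of $(n+1)M$-minors, use Lemma~\ref{jac crit} to identify its vanishing locus with $\Sing(J_n(X))$, and then conclude via normality that this ideal cannot be locally principal. The paper's version is terser at the end (it asserts ``$\J_{n,m}$ is not locally principal'' without naming Serre's $R_1$) and cites \cite[Corollary~1.2]{I} for $\Sing(J_n(X))\neq\varnothing$ where you instead use the constant jet over a singular point; these are cosmetic differences.
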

\begin{proof}
Let $A$ be the coordinate ring of $X$. We have an exact sequence \cite[Theorem 3.6]{BD}:
$$\xymatrix{0\ar[r]&A^{M}\ar[rr]^{\Jac_m(f)^t}&&A^{N} \ar[r]&\OmeA\ar[r]&0}.$$
By the proof of corollary \ref{gral pd} we obtain the following exact sequence:
\[\xymatrix{0\ar[r]&\big(A_n^{M}\big)^{n+1}\ar[rrr]^{D_n(\Jac_m(f))^t}&&&\big(A_n^{N}\big)^{n+1} \ar[r]&\OmeA\otimes B_n\ar[r]&0}.\]
Denote as $\J_{n,m}$ the ideal generated by all $(n+1)M$-minors of $\Dnm$. Let $Q=\mbox{Frac}(A_n)$. Tensoring this exact sequence with $Q$, it follows that $\dim_Q(\OmeA\otimes_A B_n)\otimes_{A_n}Q=(n+1)(N-M)$. In particular, the blowup of $J_n(X)$ at $\OmeA\otimes_A B_n$ coincides with the blowup of the ideal $\J_{n,m}$ \cite[Proposition 2.5]{Vi}. 

On the other hand, by lemma \ref{jac crit}, $\V(\J_{n,m})=\Sing(J_n(X))$. Since $X$ is singular, $J_n(X)$ is singular \cite[Corollary 1.2]{I}. It follows that $\J_{n,m}$ is not locally principal. Hence its blowup is not an isomorphism.
\end{proof}

\begin{rem}
An interesting situation where the hypothesis of the previous theorem are satisfied is the case of terminal hypersurface singularities \cite[Theorem 0.2]{EMY}.
\end{rem}

For $n=0$, the previous result recovers one the main theorems of \cite{DN1}.

\begin{coro}\cite[Theorem 4.2]{DN1}
Let $X$ be a normal and irreducible hypersurface. If the higher Nash blowup of $X$ is an isomorphism then $X$ is non-singular.
\end{coro}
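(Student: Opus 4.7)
The plan is to deduce this corollary by specializing Theorem \ref{Nobile} to $n=0$ and taking the contrapositive. Concretely, I would assume $X$ is an irreducible normal hypersurface and that, for contradiction, $X$ is singular; the goal is to show that the higher Nash blowup $N(X,\Omega^{(m)}_{X/\K})$ then fails to be an isomorphism.

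First I would unpack the $n=0$ specialization of all the relevant data. Since $A_0 = \HS^0_{A/\K} = A$ and $B_0 = A_0[t]/\langle t\rangle = A$, one has canonical identifications $J_0(X) = X$ and $\OmeA \otimes_A B_0 \cong \OmeA$. The hypotheses needed to apply Theorem \ref{Nobile} for $n=0$ — that $J_0(X)$ be irreducible and normal — reduce to the standing hypotheses on $X$. Hence Theorem \ref{Nobile} gives that the blowup of $X$ at the $A$-module $\OmeA$ is not an isomorphism.

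The remaining step is to identify this blowup with the higher Nash blowup $N(X,\Omega^{(m)}_{X/\K})$ of Definition \ref{higher Nash}. This is exactly the identification exploited inside the proof of Theorem \ref{Nobile}: by \cite[Proposition 2.5]{Vi} applied to the presentation of $\OmeA$ provided by Theorem \ref{pres om}, the Nash transformation of $\OmeA$ — which is the higher Nash blowup by definition — coincides with the blowup along the ideal of maximal minors of $\Jac_m(f)$, i.e.\ with the blowup of $X$ at the module $\OmeA$. Combining the two previous steps produces the desired contradiction.

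The main, and really only, obstacle is this compatibility check between the Grassmannian-theoretic definition of the Nash transformation and the blowup-of-a-presented-module description. That compatibility is, however, already handled in the paper via Villamayor's formalism used in the proof of Theorem \ref{Nobile}, so no new ingredient is required: the corollary is genuinely the $n=0$ specialization of the main theorem.
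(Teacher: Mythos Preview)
Your argument is correct and matches the paper's own justification: the paper simply remarks that the corollary is the $n=0$ case of Theorem \ref{Nobile}, and your unpacking of $A_0=A$, $B_0=A$, $J_0(X)=X$, $\OmeA\otimes_A B_0\cong\OmeA$, together with the identification of the higher Nash blowup with the blowup at $\OmeA$ via \cite[Proposition 2.5]{Vi}, is exactly what that specialization amounts to.
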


We conclude with a comment regarding Nash blowups of jet schemes.

\begin{rem}
Recall the notation of Theorem \ref{FD Nash}. It was proved by T. de Fernex and R. Docampo that $N(J_n'(X),(\Omega^1_{X/\K})_n')$ is isomorphic to the Nash blowup of $J_n'(X)$ (this is a direct consequence of the isomorphism $\Omega^1_{A_n}\cong \Omega^1_A\otimes B_n$, see remark \ref{iso FD}). In particular, for $m=1$, Theorem \ref{Nobile} follows from \cite{No,DN1}. Because of remark \ref{no isom}, it is not clear whether $N(J_n'(X),(\Omega^{(m)}_{X/\K})_n')$ is isomorphic to the higher Nash blowup of $J_n'(X)$. Theorem \ref{Nobile} can be seen as a positive evidence towards this question.
\end{rem}

\section*{Acknowledgements}

We thank Roi Docampo and Takehiko Yasuda for their motivating comments and questions on a previous version of this paper. We also thank the referee for the careful reading, the nice remarks, and for drawing our attention to the reference \cite{EMY}.

\vspace{.5cm}
\noindent{\footnotesize \textsc {Paul Barajas, Universidad Aut\'onoma de Zacatecas, Paseo a La Bufa entronque Solidaridad s/n, CP 98000, Zacatecas, Mexico} \\
36178329@uaz.edu.mx, paulvbg@gmail.com}\\
{\footnotesize \textsc {Daniel Duarte, Universidad Aut\'onoma de Zacatecas-CONACyT, Paseo a La Bufa entronque Solidaridad s/n, CP 98000, Zacatecas, Mexico} \\
aduarte@uaz.edu.mx}\\
\end{document}